\newcommand{\F}{\mathbb F}
\newcommand\Z{{\mathbb Z}}
\newcommand\R{{\mathbb R}}
\newcommand\N{{\mathbb N}}
\newcommand\inv{^{-1}}
\newcommand{\Ac}{\mathcal A}
\newcommand{\CC}{\mathcal C}
\newcommand{\GG}{\mathcal G}
\newcommand{\Ga}{\Gamma}
\newcommand{\ga}{\gamma}
\renewcommand{\H}{\mathbf{H}}
\newcommand{\gl}{\textnormal{GL}}
\newcommand{\ut}{\textnormal{UT}}
\newtheorem{theorem}{Theorem}[section]
\newtheorem{lemma}[theorem]{Lemma}
\newtheorem{corollary}[theorem]{Corollary}
\begin{document}
\title[Cross-wired lamplighters and linearity of automata groups]{Cross-wired Lamplighter Groups and Linearity of Automata Groups}
\author{Ning Yang}

\maketitle
\begin{abstract}We consider the two generalizations of lamplighter groups: automata groups generated by Cayley machine and cross-wired lamplighter groups. For a finite step two nilpotent group with central squares, we study its associated Cayley machine and give a presentation of the corresponding automata group. We show the automata group is a cross-wired lamplighter group and does not embed in the wreath product of a finite group with a torsion free group. For a subfamily of such finite step two nilpotent groups, we prove that their associated automata groups are linear.
\end{abstract}

\section{Introduction}

The so called lamplighter group is a popular object of study. It is defined as a restricted wreath product $\Z_2\wr\Z$, namely a semi-direct product $(\oplus_{\Z}\Z_2) \rtimes \Z$ with the action of $\Z$ on $\oplus_{\Z}\Z_2$ by shifts, where $\Z$ denotes integers and $\Z_2$ denotes $\Z/2\Z$. It is an infinitely presented 2-generated group. It is step-2 solvable and has exponential growth. In general, a lamplighter group could be a group of the form $F \wr\Z$ where $F$ is some nontrivial finite group. There are two kinds of generalization of lamplighter groups: automata groups generated by Cayley machines \cite{GZ,SiSt} and cross-wired lamplighter groups \cite{CFK}.

Grigorchuk and \.{Z}uk \cite{GZ} showed that the lamplighter group $\Z_2\wr\Z$ can be constructed as the automata group of a 2-state automaton. They used this automaton to compute the spectrum and spectral measure associated to random walks on the group, leading to a counterexample of the strong form of the Atiyah conjecture. Silva and Steinberg \cite{SiSt} showed that, for a finite abelian group $F$, the lamplighter group $F \wr\Z$ can be generated by a reset automaton, which they called the Cayley machine of the group $F$.  We study the automata groups generated by the Cayley machines of some finite nilpotent groups.

\begin{theorem}\label{pres}
Let $G=\{g_1=1,g_2,\ldots,g_k\}$ be a step-2 nilpotent group. Assume the square of each element of $G$ is central, then the automata group generated by its Cayley machine has the following presentation: $$\langle x,g_2,\ldots,g_k  \ |\  [ x^mg_jx^{-m}, \ x^ng_ix^{-n}]=x^{[\frac{n+m}{2}]}[g_j,g_i]x^{-[\frac{n+m}{2}]}, m,n\in\Z, 2\leq j,i\leq k\rangle,$$ where $[a]$ means the least integer greater than or equal to $a\in \R$ and $[g_j ,g_i ]$ denotes group commutator.
\end{theorem}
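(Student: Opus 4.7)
The plan is to work with the explicit wreath recursion for the Cayley machine. Writing $a_g$ for the tree automorphism corresponding to state $g$, one has $a_g(hw) = (gh)\cdot a_{gh}(w)$ for $h \in G$ and $w$ an infinite word in $G$; equivalently $a_g = L_g \cdot (a_{gh})_{h \in G}$, where $L_g$ denotes left multiplication at the root. I would begin by fixing an identification: set $x$ equal to a specific element of the automata group playing the role of the shift (e.g., a product of two states as in the Silva--Steinberg treatment of the abelian case), and identify each abstract generator $g_i$ with an appropriate ``lamp at position $0$'' automorphism built from $a_{g_i}$ and $a_{g_1}$. One then verifies that these elements generate the automata group, so that every relation of the presentation is a relation of the automata group and vice versa.

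The second step is to check that the listed relations actually hold. For each $m,n\in\Z$ and $2\leq j,i\leq k$ I would compute $[x^m g_j x^{-m},\, x^n g_i x^{-n}]$ directly from the wreath recursion. The lamp-like elements act trivially on the top level, so the commutator is entirely determined by its sections, which are themselves commutators of lamp elements of strictly smaller complexity $|m|+|n|$. The step-$2$ nilpotent hypothesis makes $[g_j,g_i]$ central in $G$, and the central-squares assumption forces $[g_j,g_i]^2=1$; together these collapse the contributions from the $|G|$ sections into a single lamp value rather than a complicated product. An induction on $|m|+|n|$ then pins the surviving lamp at position $[(n+m)/2]$, with the ceiling arising because two lamps at positions $m$ and $n$ first interact in the recursive expansion after roughly $(m+n)/2$ levels of descent.

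The third and most delicate step is to show that the stated relations generate all relations. Here I would follow a normal-form strategy: using the commutator relations, reduce an arbitrary word in $x, g_2, \dots, g_k$ to the canonical form $x^a \prod_{\ell} x^{n_\ell} h_\ell x^{-n_\ell}$ with indices $n_\ell$ strictly increasing and $h_\ell \in G \setminus \{1\}$, and then prove that distinct normal forms act differently on the tree by reading off the lamp values from the action on sufficiently long finite words. The main obstacle will be the explicit computation producing the ceiling $[(n+m)/2]$ in the second step: the inductive bookkeeping must carefully track how a commutator surfacing deep in the tree gets assigned to a single position at the top, and this is exactly where the central-squares hypothesis is indispensable—without it, the collapse of the section contributions fails and the clean formula breaks. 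Once the key computation is in hand, the normal-form reduction and the readout of lamps are routine but calculation-heavy.
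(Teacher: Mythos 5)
Your plan follows the paper's proof essentially step for step: identify $G$ inside the automata group via $g\leftrightarrow x\,\CC(G)_g$ with $x=\CC(G)_1^{-1}$, verify the commutation relations by induction in wreath-product coordinates (the paper's key lemma, proved by comparing the sections of $x^ngx^{-n}h$ and of $hx^{[\frac{n}{2}]}[h,g^{-1}]x^{-[\frac{n}{2}]}x^ngx^{-n}$, with the central-squares hypothesis entering exactly where you predict, via $[g,h]^2=[g^2,h]=1$), and then establish the unique normal form $x^a\prod_\ell x^{n_\ell}h_\ell x^{-n_\ell}$ whose uniqueness is read off from the depth of the tree action. The one place your sketch is rosier than reality is the inductive step: the sections of $x^ngx^{-n}$ are the products $\CC(G)^{-n}_{gg_i}\CC(G)^{n}_{g_i}$ rather than single conjugated lamps of smaller complexity (and the top-level permutation of a lamp is $g$, not trivial), so the bookkeeping that produces the ceiling $[\frac{n+m}{2}]$ is substantially heavier than described --- but that is precisely the computation the paper carries out, so this is a matter of detail rather than of approach.
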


\noindent\textbf{Remark:}
Floriane Pochon, a student of L. Bartholdi, found similar results for the case of the dihedral group of order $8$ around 2005. Her paper is entitled \textit{Structure De La Machine De Cayley De $D_8$} and never published.

Thus, the theorem of Silva and Steinberg for the case of finite abelian groups is a corollary of Theorem \ref{pres}. For a  finite non-abelian group $F$,  $F \wr\Z$ can not be a automata group. Indeed, for such finite group $F$, $F \wr\Z$ is residually finite if and only if $F$ is abelian \cite{Gru}. On the other hand, automata groups are always residually finite \cite{GNS}. Silva and Steinberg conjectured that in the case of finite non-abelian group, the resulting automata group does not embed in the wreath product of a finite group with a torsion free group. In the appendix of \cite{KSS}, Kambites, Silva and Steinberg partially proved the conjecture. They showed  for any finite non-abelian group that is not a direct product of an abelian group with a 2-group which is nilpotent of class 2, its associated automata group can not embed in a wreath product of a finite group with a torsion free group. We show the case when it contains a 2-group which is nilpotent of class 2.

\begin{theorem}\label{conje}
Let $G$ be a finite non-abelian group containing a 2-group that is nilpotent of class 2. Then the automata group generated by its Cayley machine does not embed in a wreath product of a finite group with a torsion free group.
\end{theorem}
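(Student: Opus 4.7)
The plan is to derive a contradiction from a hypothetical embedding $\phi:\mathcal G(G)\hookrightarrow F\wr T$, where $F$ is finite and $T$ is torsion-free, by combining the cross-wiring relation produced by Theorem \ref{pres} with the basic support calculus inside a wreath product. The only way $\phi$ can fail is if the image of a single lamp-type element must carry an infinite support, which is clearly forbidden.

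First I would locate a good pair inside the $2$-group of class $2$ contained in $G$. In any nonabelian nilpotent $2$-group of class $2$, the derived subgroup contains an involution $z$, and using $[a^{2^k},b]=[a,b]^{2^k}$ in class $2$ one can realize any such involution as an honest commutator $z=[a,b]$ with $a,b\in G$. The two-generator subgroup $\langle a,b\rangle$ is then step-$2$ nilpotent with a central commutator subgroup of exponent $2$, so every square is central in $\langle a,b\rangle$. Theorem \ref{pres} therefore applies to this pair, and the same cross-wiring computation that produces the relations there produces the identity
\[
[x^{m}ax^{-m},\,x^{n}bx^{-n}]=x^{\lceil(m+n)/2\rceil}\,z\,x^{-\lceil(m+n)/2\rceil}\qquad(m,n\in\Z)
\]
inside $\mathcal G(G)$ itself, since the computation depends only on the states $a,b,z$ and the shift state $x$.

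Now suppose $\phi$ exists. Every torsion element of $F\wr T$ lies in the base $B=F^{(T)}$ (because $T$ is torsion-free), so $\phi(a),\phi(b),\phi(z)$ lie in $B$ with finite supports $S_a,S_b,S_z\subset T$; moreover $S_z\ne\emptyset$ because $z\ne 1$ in $G$ and $\phi$ is injective. Let $t_x$ be the image of $\phi(x)$ in the quotient $T$; because $x$ has infinite order in $\mathcal G(G)$ and $\phi$ is injective, $t_x$ must have infinite order in $T$. Conjugation by $\phi(x)^m$ translates base supports by $m\,t_x$, and the commutator of two base elements is computed coordinatewise, so it is supported in the intersection of the two supports. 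Applying $\phi$ to the relation above therefore gives
\[
K\,t_x+S_z\;\subseteq\;(m\,t_x+S_a)\cap(n\,t_x+S_b),\qquad K=\lceil(m+n)/2\rceil,
\]
and in particular $(K-m)\,t_x+S_z\subseteq S_a$. Fixing $m=0$ and letting $n$ range over $\Z$ makes $K=\lceil n/2\rceil$ hit every integer, so $\ell\,t_x+S_z\subseteq S_a$ for every $\ell\in\Z$. Since $t_x$ has infinite order and $S_z$ is a fixed nonempty finite set, the translates $\{\ell\,t_x+S_z\}_{\ell\in\Z}$ are eventually pairwise disjoint; their union is infinite but is contained in the finite set $S_a$, a contradiction.

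The main obstacle I anticipate is the transfer step in the first paragraph: certifying that the specific cross-wiring relation of Theorem \ref{pres} is available in $\mathcal G(G)$ for the chosen pair $a,b$ even though $G$ as a whole need not have central squares. I expect this to reduce to a local automaton computation involving only the three states $a,b,z$ and $x$, or alternatively to the observation that $\langle a,b\rangle\le G$ gives rise to a natural homomorphism $\mathcal G(\langle a,b\rangle)\to\mathcal G(G)$ through which the relation is pushed forward; once this local/functorial point is secured, the support-shift contradiction above closes the argument.
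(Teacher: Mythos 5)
Your overall strategy is sound and your finishing argument is a legitimate alternative to the paper's. Both proofs begin the same way: inside the class-2 $2$-subgroup one extracts a non-commuting pair $a,b$ whose two-generator subgroup $H=\langle a,b\rangle$ has all squares central (equivalently, $[a,b]$ is a central involution of $H$), so that the cross-wiring relation of Theorem \ref{pres} is available for this pair. Where you diverge is in how that relation is played off against the structure of $F\wr T$. The paper forms the conjugates $\gamma_n=(x^nhx^{-n})g(x^nh\inv x^{-n})=g\,x^{[n/2]}[g,h\inv]x^{-[n/2]}$, notes via Lemma \ref{depth} that these have depth $[n/2]+1$ and hence are pairwise distinct, and concludes that the conjugacy class of $g$ inside the torsion subgroup is infinite, contradicting Lemma 6.4 of \cite{KSS}. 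You instead take the relation with $m=0$ and varying $n$, push it through the hypothetical embedding, and run the support calculus directly: torsion elements land in the restricted base, commutators of base elements are supported in the intersection of supports, conjugation by $\phi(x)^{\ell}$ translates supports by $t_x^{\ell}$, and the relation forces $\mathrm{supp}(\phi(a))$ to contain every translate $t_x^{\ell}\cdot\mathrm{supp}(\phi(z))$, which is absurd. This is correct (one small repair: $t_x\neq 1$ follows not merely from injectivity of $\phi$ but from the fact that the restricted base is a torsion group, so the infinite-order element $\phi(x)$ cannot lie in it), and it amounts to an inlined, self-contained proof of the finiteness-of-torsion-conjugacy-classes lemma that the paper cites from \cite{KSS}; you lose brevity and gain independence from that reference.

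The step you flag as an obstacle deserves a sharper warning, because your proposed repair for it does not work. Theorem \ref{pres} and Lemma \ref{keylemma} are proved under the hypothesis that \emph{all} of $G$ has central squares: the wreath-product coordinates of $x^ngx^{-n}$ are indexed by every $f\in G$, and already the $n=2$ step of Lemma \ref{keylemma} uses that $(hf)^2$ commutes with $g$ for arbitrary $f\in G$, not just for $f\in H$. So the cross-wiring relation for the pair $a,b$ must be re-derived inside $\GG(\CC(G))$; it does not follow formally from the case of $\GG(\CC(H))$. Your functorial suggestion goes the wrong way: since $H^{\omega}$ is invariant under the states $\CC(G)_h$, $h\in H$, restriction gives a surjection from $\langle \CC(G)_h : h\in H\rangle$ \emph{onto} $\GG(\CC(H))$, so relations of $\GG(\CC(G))$ push down to $\GG(\CC(H))$ but not conversely; there is no natural embedding $\GG(\CC(H))\hookrightarrow\GG(\CC(G))$ through which to push the relation forward. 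To be fair, the paper's own proof quietly assumes the same transfer when it writes $\gamma_n=g\,x^{[n/2]}[g,h\inv]x^{-[n/2]}$, so you are in the same position it is; but a complete argument requires either redoing the coordinate computation of Lemma \ref{keylemma} with $g,h\in H$ while the coordinates range over all of $G$, or establishing the needed identity by a direct automaton calculation.
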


Combining results in the appendix of \cite{KSS}, we have

\begin{theorem}\label{embed}
The automata group associated  to any finite non-abelian group does not embed in a wreath product of a finite group with a torsion free group.
\end{theorem}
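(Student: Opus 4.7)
The plan is to deduce Theorem \ref{embed} by combining Theorem \ref{conje} with the results of the appendix of \cite{KSS} through a simple dichotomy on the structure of the finite non-abelian group $G$. The key observation is that these two ingredients cover complementary cases, so the proof reduces to verifying that together they exhaust all finite non-abelian groups.

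First, I would recall what each ingredient gives. The appendix of \cite{KSS} establishes the non-embedding conclusion for every finite non-abelian group that is \emph{not} isomorphic to a direct product of an abelian group with a 2-group of nilpotency class $2$. Theorem \ref{conje}, on the other hand, establishes the non-embedding conclusion whenever $G$ \emph{contains} a 2-group of nilpotency class $2$.

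Next, I would show the two cases cover everything. Let $G$ be an arbitrary finite non-abelian group. If $G$ is not a direct product of an abelian group with a 2-group of class $2$, apply \cite{KSS}. Otherwise, write $G \cong A \times N$ with $A$ abelian and $N$ a 2-group nilpotent of class $2$. Since $G$ is non-abelian while $A$ is abelian, $N$ cannot be abelian, so $N$ has nilpotency class exactly $2$. Then $N$ is a subgroup of $G$ and is a 2-group nilpotent of class $2$, so Theorem \ref{conje} applies and yields the conclusion.

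There is no genuine obstacle here: the content of the theorem sits entirely inside Theorem \ref{conje} and the appendix of \cite{KSS}, and the proof is a short case analysis verifying that the two hypotheses together form a tautology on the class of finite non-abelian groups. The only care needed is noting that in the decomposition $G \cong A \times N$ the factor $N$ is forced to be genuinely of class $2$ (rather than abelian), so that Theorem \ref{conje} is indeed applicable.
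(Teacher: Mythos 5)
Your proof is correct and follows essentially the same route as the paper, whose own proof is just the one-line statement that the result follows by combining Theorem \ref{conje} with the appendix of \cite{KSS}; you have merely made explicit the dichotomy (and the observation that the 2-group factor $N$ in $G\cong A\times N$ must be genuinely non-abelian) that the paper leaves implicit.
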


Some Cayley graphs of lamplighter groups are examples of Diestel-Leader graphs \cite{DL, Moeller, Woess, Wortman}. Eskin, Fisher, and Whyte proved \cite{EFW1, EFW2, EFW3} that a finitely generated group is quasi-isometric to a
lamplighter group if and only if it acts properly and cocompactly by
isometries on some Diestel-Leader graph.  
 In \cite{CFK}, Cornulier, Fisher and Kashyap studied quasi-isometric rigidity question of lamplighter groups by studying the cocompact lattices in the isometry group of the Diestel-Leader graphs. They showed that these lattices are not necessarily lamplighters and gave an algebraic characterization of them. They call these cocompact lattices \textit{cross-wired lamplighter groups}. Lamplighter groups are examples of cross-wired lamplighters. 
 It is suggested by L. Bartholdi that automata groups associated to some finite groups might provide other examples of cross-wired lamplighter groups. We show that the automata groups in Theorem \ref{pres} are cross-wired lamplighter groups.

\begin{theorem}\label{lamp}
The automata groups in Theorem \ref{pres} are cross-wired lamplighter groups.
\end{theorem}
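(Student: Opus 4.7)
The plan is to verify the algebraic characterization of cross-wired lamplighter groups from \cite{CFK} directly from the presentation in Theorem \ref{pres}. Let $\Gamma$ denote the automata group and write $g_j^{(n)} := x^n g_j x^{-n}$. First, construct a surjection $\pi: \Gamma \to \Z$ by $\pi(x) = 1$ and $\pi(g_j) = 0$; this is well-defined because both sides of every defining relation have total $x$-exponent zero. Setting $N := \ker \pi$, the kernel is the normal closure of $\{g_2, \ldots, g_k\}$ and is generated as a subgroup by the conjugates $g_j^{(n)}$ for $n \in \Z$ and $2 \leq j \leq k$.

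The core technical step is to prove that $N$ is locally finite. For integers $a \leq b$, consider $N_{[a,b]} := \langle g_j^{(n)} : a \leq n \leq b,\ 2 \leq j \leq k \rangle$. The defining relations yield
$$[g_j^{(m)}, g_i^{(n)}] = x^{\lceil(m+n)/2\rceil}[g_j, g_i]x^{-\lceil(m+n)/2\rceil},$$
so for $a \leq m, n \leq b$ the index $\lceil(m+n)/2\rceil$ again lies in $[a,b]$, whence every such commutator (after expanding $[g_j, g_i]$ as a word in the $g_l$'s and shifting) lies in $N_{[a,b]}$. Using the step-$2$ and central-squares hypotheses on $G$, I expect to show that $N_{[a,b]}$ is itself step-$2$ nilpotent: iterating the defining relations to compute $[g_l^{(p)}, x^q[g_j, g_i]x^{-q}]$ produces a product of conjugates that reduces in $G$ to a conjugate of $[g_l, [g_j, g_i]] = 1$. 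Being then a finitely generated nilpotent group of finite exponent, $N_{[a,b]}$ is finite. Since any finite subset of $N$ lies in some $N_{[a,b]}$, $N$ is locally finite.

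With $N$ locally finite, the cross-wiring data is assembled as follows. Conjugation by $x$ realizes the shift $g_j^{(n)} \mapsto g_j^{(n+1)}$. Set $N^+ := \langle g_j^{(n)} : n \geq 0\rangle$ and $N^- := \langle g_j^{(n)} : n < 0\rangle$; these are locally finite subgroups generating $N$, with $xN^+x^{-1} \subsetneq N^+$ a subgroup of finite index, and analogously $x^{-1}N^-x \subsetneq N^-$. The key departure from an ordinary wreath product is that $N^+$ and $N^-$ do not commute: for $m < 0 \leq n$, the commutator $[g_j^{(m)}, g_i^{(n)}]$ equals $x^{\lceil(m+n)/2\rceil}[g_j, g_i]x^{-\lceil(m+n)/2\rceil}$, a nontrivial element that sits in $N^+$ or $N^-$ according to the sign of $m + n$. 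This is precisely the cross-wiring required by \cite{CFK}.

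The main obstacle is the step-$2$ nilpotency of $N_{[a,b]}$: the presentation does not directly assert centrality of the shifted commutators, so one must iterate the defining relations and reduce carefully to the fact that $[g_l, [g_j, g_i]] = 1$ in $G$. A secondary task is matching the remaining conditions in the CFK definition — essentially commensurability of $xN^+x^{-1}$ with $N^+$ and a normal form for elements of $N$ — which should follow from the explicit commutator formula together with local finiteness.
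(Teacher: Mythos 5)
Your overall strategy matches the paper's: take $L=\langle x^nGx^{-n}:n\ge 0\rangle$ and $L'=\langle x^nGx^{-n}:n\le -1\rangle$ inside $N=\ker\pi$ and verify the Cornulier--Fisher--Kashyap criterion. The gap is in where you place the work. The two conditions that carry the real content are (i) $L\cap L'$ is finite and (ii) the double coset space $L\backslash N/L'$ is finite, and your proposal never engages with either. Condition (ii) needs the reordering statement (Lemma \ref{cor}): any product of conjugates $x^{n}gx^{-n}$ can be rewritten with exponents in increasing order at the cost of central commutator terms, so every element of $N$ factors as an element of $L$ times an element of $L'$, making the double coset space trivial. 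Condition (i) is the harder one: you must show that a nontrivial product of conjugates with all exponents $\ge 0$ never equals one with all exponents $\le -1$. That is a claim that certain words are \emph{nontrivial}, which cannot be read off from the defining relations alone --- the relations give the existence half of a normal form, not the uniqueness half. The paper obtains uniqueness (Lemma \ref{form}), and hence $L\cap L'=\{1\}$, from the concrete realization as tree automorphisms via the depth function: elements of $L$ have finite depth, while every nontrivial element of $L'$ has infinite depth (using Lemma \ref{depth}). Your plan to argue ``directly from the presentation'' has no substitute for this step; deferring it as ``a normal form for elements of $N$ \dots should follow from the explicit commutator formula'' is precisely where the argument would fail.

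By contrast, the step you single out as the main obstacle --- local finiteness of $N$ via finiteness of the truncated subgroups $N_{[a,b]}$ --- is sound (the observation that $\lceil(m+n)/2\rceil\in[a,b]$ when $m,n\in[a,b]$ is correct, and centrality of the shifted commutators is exactly Lemma \ref{cor}), but local finiteness is not one of the CFK conditions and is in any case already established in \cite{SiSt}. Finally, the finite-index claim $[L:xLx^{-1}]<\infty$, which you assert without justification, also rests on the reordering lemma: every element of $L$ can be written as $g\cdot w$ with $g\in G$ and $w\in xLx^{-1}$, so the index is at most $|G|$.
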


Cornulier, Fisher and Kashyap also give interesting examples that are not virtually lamplighter groups \cite{CFK}. Let $q$ be a prime power, and consider the Laurent polynomial ring $\F_q[t^{\pm 1}]$. 
Let $\H$ be the Heisenberg group, consisting of unitriangular $3\times 3$ matrices. Consider the action of $\Z$ on $\H(\F_q[t^{\pm 1}])$ defined by the automorphism
\[
\Phi: \ \left(
  \begin{array}{ccc}
  1 & x & z\\
  0 & 1 & y\\
  0 & 0 & 1
  \end{array}
  \right) \mapsto
\left(
  \begin{array}{ccc}
  1 & tx & t^2z\\
  0 & 1 & ty\\
  0 & 0 & 1
  \end{array}
  \right).
\]
Then the group $\H(\F_q[t^{\pm 1}])\rtimes_\Phi \Z$ is a cross-wired lamplighter group. This construction applies to other nilpotent groups over local field of positive characteristic with similar contraction $\Z$ actions. In the beginning, we believed that the automata groups in Theorem \ref{pres} might be totally different examples of cross-wired lamplighters from those examples above, or even nonlinear. By \textit{linear} we mean the group can be embedded in a general linear group over some fields. As we know, many automata groups are nonlinear. The Grigorchuk groups \cite{G, E} are automata groups, of intermediate growth and hence nonlinear. But the lamplighter group $\Z_2\wr\Z$ is linear over function fields \cite{Wortman}. So it is interesting to study the linearity/nonlinearity of those automata groups above. We find the following surprising. Let $Q_8$ be the quaternion group of order $8$ and $M_{2^n}$ be the Iwasawa/modular group of order $2^n$, $n\geq 3$, $M_{2^n}=\langle a, b \ |\ a^{2^{n-1}}=b^2=e,  bab\inv=a^{2^{n-2}+1} \rangle.$ The automata groups associated to $Q_8$ and $M_{2^n}$ have index two subgroups behave like above linear examples with diagonal contracting $\Z$ actions.

\begin{theorem}\label{linear}
The automata groups associated to $Q_8$ and $M_{2^n}$ are linear over $\Z_2[t^{\pm 1}]$.
\end{theorem}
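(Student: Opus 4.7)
The plan is to exploit the hint in the paragraph preceding the theorem: a suitable index-two subgroup of each automata group should behave like the Cornulier--Fisher--Kashyap example $\H(\F_q[t^{\pm 1}])\rtimes_\Phi \Z$, for which linearity over the function ring is visible by construction. First I would apply Theorem \ref{pres} to $G=Q_8$ and $G=M_{2^n}$; both are $2$-groups of class $2$ with central squares (for $M_{2^n}$ one checks $(a^ib)^2=a^{i(2^{n-2}+2)}\in \langle a^2\rangle = Z(M_{2^n})$, and for $Q_8$ the squares are $\pm 1$), so the presentation applies and furnishes explicit generators and relations for $\Ga_{Q_8}$ and $\Ga_{M_{2^n}}$. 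Then I would identify an index-two subgroup $\Ga_0\le \Ga$ as the kernel of the parity homomorphism $\Ga\to \Z/2\Z$ sending $x\mapsto 1$ and each $g_i\mapsto 0$; this is well defined because both sides of every relation in Theorem \ref{pres} map to $0$.

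Setting $y:=x^2$ and working inside $\Ga_0$, the ceiling disappears from the commutator relations restricted to even powers of $x$, and one expects $\Ga_0$ to be a semidirect product $N\rtimes_\Phi \Z$, where $\Z=\langle y\rangle$, $N$ is the normal closure of $\{g_i\}$ in $\Ga_0$, and $\Phi$ is conjugation by $y$. The crucial observation is that in $G$ one has $[g_j,g_i]^2=[g_j,g_i^2]=1$ under the central-squares hypothesis, so all commutators have order dividing $2$; this forces $N/[N,N]$ and $[N,N]$ to carry compatible $\F_2[t^{\pm 1}]$-module structures with $t$ acting as $\Phi$, and makes the commutator bracket $\F_2[t^{\pm 1}]$-bilinear. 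Thus $N$ should be realized as the $\F_2[t^{\pm 1}]$-points of a class-$2$ unipotent group scheme (the Heisenberg group for $Q_8$, since every nontrivial commutator there is the central $-1$; a close Heisenberg variant for $M_{2^n}$ encoding the twisted action $bab^{-1}=a^{2^{n-2}+1}$), with $\Phi$ a diagonal contraction in the sense of the $\H(\F_q[t^{\pm 1}])\rtimes_\Phi \Z$ example. Linearity over $\Z_2[t^{\pm 1}]=\F_2[t^{\pm 1}]$ is then obtained from the standard unitriangular embedding $N\hookrightarrow \ut_d(\Z_2[t^{\pm 1}])$ (with $d=3$ for $Q_8$, slightly larger for $M_{2^n}$), together with realizing $\Phi$ by conjugation by $\mathrm{diag}(1,t,t^2,\ldots)$; assembling these gives an embedding $\Ga_0\hookrightarrow \gl_{d+1}(\Z_2[t^{\pm 1}])$. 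Finally, since $[\Ga:\Ga_0]=2$, inducing this representation from $\Ga_0$ to $\Ga$ yields a faithful embedding $\Ga\hookrightarrow \gl_{2(d+1)}(\Z_2[t^{\pm 1}])$, since the kernel of induction is the normal core of $\{1\}$ in $\Ga$.

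The main obstacle is the $M_{2^n}$ case: matching the Cayley-machine commutator relations from Theorem \ref{pres} to a concrete unitriangular embedding requires reconciling the internal relation $bab^{-1}=a^{2^{n-2}+1}$ with the ring variable $t$ encoding the shift. One must verify that $N/[N,N]$ and $[N,N]$ are free $\F_2[t^{\pm 1}]$-modules of the expected rank (no unexpected torsion collapses the picture), that the commutator pairing is the expected $\F_2[t^{\pm 1}]$-bilinear form, and then write down explicit matrix generators realizing the presentation. For $Q_8$ this is the classical Heisenberg picture and should go through immediately; for $M_{2^n}$ the additional twisted $a$-action forces a slightly larger $d$ and a more delicate choice of basis, which is where the real content of Theorem \ref{linear} lies. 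Once this coordinatization is pinned down, both the unitriangular embedding and the induction from $\Ga_0$ to $\Ga$ are routine.
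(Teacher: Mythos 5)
Your overall strategy---pass to the index-two subgroup $\langle G, xGx\inv, x^2\rangle$, realize the torsion part inside unitriangular matrices over $\Z_2[t^{\pm 1}]$ with $x^2$ acting as a diagonal contraction, check the relations of Theorem \ref{pres}, and recover the full group because virtually linear implies linear---is exactly the paper's. But two of your concrete structural claims fail, and they are where the actual content lies. First, the ceiling does \emph{not} disappear inside $\Ga_0$: the torsion subgroup of $\Ga_0$ is generated by $x^n g x^{-n}$ for \emph{all} $n\in\Z$ (your ``normal closure of $\{g_i\}$ in $\Ga_0$'' omits the odd conjugates $xgx\inv$, which are among the generators of $\Ga_0$, so $\Ga_0\neq N\rtimes\langle y\rangle$ with your $N$), and even among even conjugates the relation $[x^{2m}g x^{-2m},x^{2n}h x^{-2n}]=x^{m+n}[g,h]x^{-(m+n)}$ places the commutator at an arbitrary integer position. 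Since $t=x^2$ shifts positions by $2$, the abelianization of the torsion subgroup is a free $\F_2[t^{\pm1}]$-module of rank $4$ (even/odd conjugates of $a$ and of $b$) and the commutator subgroup has rank $2$, not the Heisenberg ranks $2$ and $1$; the parity-dependent shift $[\frac{n+m}{2}]$ is precisely the twist your ``$\F_2[t^{\pm1}]$-bilinear Heisenberg pairing'' cannot see.

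Second, and fatally for your ``$d=3$, goes through immediately'' claim: $Q_8$ does not embed in $\ut_3(\Z_2[t^{\pm1}])$ at all. If $A=I+uE_{12}+vE_{23}+wE_{13}$ and $B=I+u'E_{12}+v'E_{23}+w'E_{13}$, then $A^2=I+uvE_{13}$, $B^2=I+u'v'E_{13}$, $[A,B]=I+(uv'+u'v)E_{13}$, and the $Q_8$ relations force $uv=u'v'=uv'+u'v\neq 0$; writing $u'=ru$, $v'=v/r$ gives $r+r\inv=1$, i.e.\ $r^2+r+1=0$, which has no solution in $\F_2(t)$. This is why the paper works in $\gl_6(\Z_2[t^{\pm1}])$ (one row, four columns for the even/odd conjugates of $a$ and $b$, one column for the centre, with $\alpha(x^2)=\mathrm{diag}(1,t,t,t,t,t^2)$), and in $\gl_{2^{n-1}+2}$ for $M_{2^n}$, followed by an explicit kernel computation reading off each family of conjugates from designated matrix entries via the normal form of Lemma \ref{form}. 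So your plan identifies the right subgroup and the right kind of target, but the coordinatization you call immediate is impossible as stated, and the rank bookkeeping and injectivity argument that you defer as routine are the substance of the proof.
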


\medskip

\noindent\textbf{Acknowledgements.}  The author would like to express his deep gratitude to his advisor, Professor David Fisher, for consistent support, enlightening guidance and encouragement. He also thanks Laurent Bartholdi and Michael Larsen for their interests and inspiring discussions. He thanks Yves de Cornulier for corrections on an early draft of this paper.

\section{Automata groups}

\subsection{Automata and Cayley Machine} This subsection is similar with part of \cite{KSS}.

A $finite \ (Mealy) \  automaton$ \cite{Eilenberg, GNS,KSS, SiSt}
$\Ac$ is a $4$-tuple $(Q,A,\delta,\lambda)$, where $Q$ is a finite
set of states, $A$ is a finite alphabet, $\delta:Q\times A\to Q$
is the transition function and $\lambda: Q\times A\to A$ is the
output function. We write $qa$ for $\delta(q,a)$ and $q\circ a$
for $\lambda (q,a)$. Let $A^*$ be the set of all words in letters of $A$.
These functions extend to the free monoid $A^*$ by $$q(ab) = (qa)b,$$ \begin{equation}\label{action}
q\circ (ab) = (q\circ a)(qa)\circ b.
\end{equation}

Let $\Ac_q$ denote the 
\textit{initial automaton} $\Ac$ with designated start state $q\in Q$.
There is a function $\Ac_q:A^*\to A^*$ given by $w\mapsto q\circ
w$. This function is word length preserving and extends continuously \cite{GNS}
to the set of right infinite words $A^{\omega}$ via 
\begin{equation}\label{actiononcantor}
\Ac_q(a_0a_1\cdots) = \lim_{n \to \infty} \Ac_q(a_0\cdots a_n),
\end{equation}
where $A^{\omega}$
is given the product topology and so homeomorphic to a Cantor
set. If the function $\lambda_q:A\to A$, defined
by $\lambda_q(a) = q\circ a$, is a permutation for each $q$, then $\Ac_q$ is an
isometry of $A^{\omega}$ with metric $d(u,v) = 1/(n+1)$, where
$n$ is the length of the longest common prefix of $u$ and $v$
\cite{GNS}. In this case the automaton is called
\emph{invertible} and its \emph{inverse} is dented by $\Ac\inv$.  Let $\Ga=\GG (\mathcal
A)$, $\Ac$ invertible, be the group generated by all $\Ac_q$'s with $q\in Q$, and it is called
the \emph{automata group generated by automaton} $\Ac$.

Let $T$ be the Cayley tree of $A^*$, then $\Ga$ acts on $T$ by rooted tree automorphisms \cite{GNS} via the action \eqref{action}.  The induced action on the boundary
$\partial T$, the space of infinite directed paths from the root,
is the action \eqref{actiononcantor} of $\Ga$ on
$A^{\omega}$.
Let $\mathrm{Aut}(T)$ be the automorphism group of $T$. It is the iterated
permutational wreath product of countably many copies of the
left permutation group $(S_{A},A)$ \cite{BGS, GNS, Rhodes},
where $S_A$ is the symmetric group on $A$. 
For a group $\Ga=\GG
(\Ac)$ generated by an automaton over $A$, one has the embedding
\begin{equation}\label{wreathembedding1}
(\Ga,A^{\omega}) \hookrightarrow (S_{|A|},A)\wr (\Ga,A^{\omega}).
\end{equation}
Here we use the notation such that the wreath product of left permutation groups has a natural projection to its leftmost factor.
Let $A=\{a_1,\ldots,a_n\}$. In wreath product coordinates, an element 
in the automata group can be represented as $\sigma(f_1,\ldots,f_n)$,
where $\sigma\in S_A$ and $f_1,\ldots,f_n\in \Ga$.
It acts on the word $a_ix_2x_3\ldots$ over $A$ by the rule 
$$\sigma(f_1,\ldots,f_n)(a_ix_2x_3\ldots)=\sigma(a_i)f_i(x_2x_3\ldots).$$
Multiplication in wreath product coordinates is given by
\begin{equation}\label{multiply}
\sigma (f_1,\ldots,f_n)\tau (g_1,\ldots,g_n)=\sigma\tau(f_{\tau(1)}g_1,
\ldots,f_{\tau(n)}g_n),
\end{equation}
where $\sigma, \tau \in S_A\cong S_{\{1,2,\dots,n\}}$ and $f_1,\ldots,f_n,g_1,\ldots,g_n\in \Ga$.
The map sends $\Ac_q$ to the element with wreath product coordinates:
\begin{equation}\label{wreathcoords}
\Ac_q = \lambda_q(\Ac_{qa_1},\ldots,\Ac_{qa_n}).
\end{equation}
 Its inverse is given by
$$\Ac\inv_q=\lambda\inv_q(\Ac\inv_{a_1},\ldots,\Ac\inv_{a_n}).$$

\medskip

Let $G=\{g_1=1,g_2,\ldots,g_n\}$ be a non-trivial finite group. By the 
\textit{Cayley machine} $\mathcal C(G)$ of $G$ we mean the 
automaton with state $G$ and
alphabet $G$. Both the transition and the output functions are the
group multiplication, i.e., at state $g_0$ on input $g$ the
machine goes to state $g_0g$ and outputs $g_0g$. 
 The state function $\lambda_g$ is just left translation by $g$ and hence a
permutation, so $\CC (G)$ is invertible. The study of the automata group of the Cayley machine of a finite group was initiated by Silva and Steinberg \cite{SiSt}.

An automaton is called a \emph{reset automaton} if, for each $a\in
A$, $|Qa|=1$; that is, each input resets the automaton to a single
state.  Silva and Steinberg \cite{SiSt} showed that the inverse of
a state $\CC(G)_g, g\in G,$ is computed by the corresponding state of the
reset automaton $\mathcal A(G)$ with states and input alphabet
$G$, where at state $g_0$ on input $g$ the automaton goes to
state $g$ and outputs $g_0\inv g$.  Therefore $$\CC(G)\inv_g=\Ac(G)_g,
\ \  \ \GG(\CC(G))=\GG(\Ac(G)).$$ In this case, we have the embedding
\begin{equation}\label{wreathembedding}
\GG(\Ac(G))\hookrightarrow (G,G)\wr (\GG (\Ac (G)),G^{\omega}).
\end{equation}
In wreath product coordinates,
\begin{equation}\label{wreathcoords2}
\Ac(G)_g = g\inv(\Ac_{g_1},\ldots,\Ac_{g_n}).
\end{equation}

 Let $x:= \Ac(G)_1=\CC(G)_1^{-1}$.  Notice that \cite{SiSt}
 \begin{align*}
 x\Ac(G)\inv_g &=x\CC(G)_g
 \\ &=(\Ac_{g_1},\ldots,\Ac_{g_n})g(\CC(G)_{gg_1}\ldots,\CC(G)_{gg_n})
 \\ &=g(\Ac_{gg_1}\CC(G)_{gg_1},\ldots,\Ac_{gg_n}\CC(G)_{gg_n})
 \\ &=g(1,\ldots,1),
 \end{align*}
so we can identify $G$
with a subgroup of $\GG(\Ac(G))$ via $g\leftrightarrow
x\Ac(G)\inv_g$. Recall from \cite{SiSt} Equation 4.3 or by \eqref{wreathcoords2} that
$$x(f_0, f_1,f_2,f_3,f_4,\ldots)=(f_0,f\inv_0f_1,f\inv_1f_2,f\inv_2f_3,f\inv_3f_4,\ldots),$$
$$x\inv(f_0, f_1,f_2,f_3,f_4,\ldots)=(f_0,f_0f_1,f_0f_1f_2,f_0f_1f_2f_3,f_0f_1f_2f_3f_4,\ldots),$$
where $(f_0, f_1,f_2,f_3,f_4,\ldots)\in G^{\omega}.$

Let
$$N:=\langle x^nGx^{-n}\mid n\in \Z\rangle.$$
It is shown in \cite{SiSt} that $x$ has infinite order, $N$ is a
locally finite group and $\GG(\Ac(G)) = N\rtimes\langle x\rangle$,
where $x$ acts on $N$ by conjugates.
If $G$ is abelian, it is shown in \cite{SiSt} that
$$\GG(\Ac(G)) \cong G \wr\Z.$$  That is the automata group
is a lamplighter group. If $G$ is non-abelian and not of nilpotency class 2, then
$\GG(\Ac(G))$ does not embed in the wreath product of a finite group with a torsion free group.

The \emph{depth} of an element
$\ga\in \Ga$ is the least integer $n$ (if it exists, otherwise infinity) so that
$\ga$ only changes the first $n$ letters of a word. For example, $g\in G$ has depth $1$.
Here is a useful lemma about depth:

\begin{lemma}\label{depth}\cite{SiSt}
Let $\Ac=(Q,A,\delta,\lambda)$ be an invertible reset automaton and let $a=\Ac_p, 
b=\Ac_q$. Suppose $f\in \Ga$ has depth $n$, then $afb\inv$ has depth at most $n+1$.
\end{lemma}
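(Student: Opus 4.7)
The plan is to induct on the depth $n$ of $f$, using the wreath-product coordinates \eqref{wreathcoords} and the multiplication rule \eqref{multiply}. The key observation is the reset property: since $|Qa_i|=1$ for every $a_i\in A$, the state $\delta(q,a_i)$ depends only on $a_i$, not on $q$. Writing $c_i:=\mathcal{A}_{\delta(a_i)}$, formula \eqref{wreathcoords} then reads $\mathcal{A}_r=\lambda_r(c_1,\ldots,c_n)$ for every state $r$, so $a=\lambda_p(c_1,\ldots,c_n)$ and $b=\lambda_q(c_1,\ldots,c_n)$ have \emph{identical} second-level entries.

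For the base case ($n=0$, i.e.\ $f=1$), one writes $b^{-1}$ in wreath coordinates and applies \eqref{multiply}: all second-level entries of $ab^{-1}$ are products of the form $c_j c_j^{-1}=1$, so $ab^{-1}=\lambda_p\lambda_q^{-1}(1,\ldots,1)$, which has depth at most $1$. This cancellation is exactly where the reset hypothesis does its work.

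For the inductive step, decompose $f=\tau(f_1,\ldots,f_n)$ with each $f_i$ of depth at most $n-1$. Two applications of \eqref{multiply} give $afb^{-1}=\lambda_p\tau\lambda_q^{-1}(h_1,\ldots,h_n)$, where each entry has the form $h_i=c_{j'}\,f_j\,c_j^{-1}$ for indices $j,j'$ determined by $\tau$ and $\lambda_q$. Since $c_{j'}$ and $c_j$ are themselves single-state automaton elements $\mathcal{A}_s$, the inductive hypothesis applies to $h_i$, showing each $h_i$ has depth at most $(n-1)+1=n$. Consequently $afb^{-1}$ changes at most the first $n+1$ letters of any input word, which is the desired bound.

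The main technical point is bookkeeping the index substitutions produced by \eqref{multiply} and by inversion in the wreath product; the conceptual heart, however, is entirely in the base case, where the reset property collapses the second-level entries of $ab^{-1}$ to the identity. Without the reset hypothesis these entries would not cancel and the depth could in principle double rather than grow by $1$.
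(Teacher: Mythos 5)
Your proof is correct. Note that the paper itself does not prove this lemma---it is quoted with a citation to [SiSt]---and your argument (induct on depth in wreath coordinates, using the reset property to see that every state $r$ has the same tuple of first-level sections $\Ac_r=\lambda_r(c_1,\dots,c_{|A|})$, so that the sections of $afb\inv$ all have the form $c_{j'}f_jc_j\inv$ and the hypothesis applies to them) is essentially the standard proof from Silva--Steinberg. The only blemish is notational: you use $n$ both for the depth of $f$ and for the size of the alphabet, which should be disambiguated.
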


Let
$$N_0 := \langle x^nGx^{-n}\mid n \geq 0\rangle.$$
It is shown in \cite{SiSt} that $x^ngx^{-n}, g\neq 1$, has depth $n+1$ and
so $N_0$ consists of finitary automorphisms.   It is shown in
\cite{SiSt} that the elements of the form $\Ac(G)_g \in \Ga$ with
$g \in G$ generate a free subsemigroup of $\Ga$.

\subsection{Proof of Theorem \ref{pres}}

Let $$G=\{g_1=1,g_2,\ldots,g_k\}$$ be a group of nilpotency class two, for some $k\in \N$. Moreover we assume $g_i^2=g_ig_i$ is central in $G$ for each $i, 1\leq i\leq k$. The dihedral group $D_8$ of order 8 and the quaternion group $Q_8$ are examples of such $G$.

Note that every finite group of nilpotency class two is obtained by taking direct products of finite groups of prime power order and class two. Therefore, $G$ must be a 2-group direct product with abelian factors.

Let $\CC(G)$ be the  Cayley machine  of $G$, then $\GG(\CC(G))$ is generated by $$\{\CC(G)_{g_1},\CC(G)_{g_2},\ldots,\CC(G)_{g_k}\}.$$
Let $x=\CC(G)\inv_{g_1}.$ Then in wreath product coordinates, $$x=1\Big(\CC(G)\inv_{g_1},\CC(G)\inv_{g_2},\ldots,\CC(G)\inv_{g_k}\Big).$$It has infinite order. Identifying $G$ with a subgroup of $\CC(G)$ via 
$g\mapsto x\CC(G)_g,$ then we have 
\begin{equation}\label{Cg}
\CC(G)_g=x\inv g, \ \ \ \CC(G)\inv_g=g\inv x.
\end{equation}
Therefore, for any $g, h\in G$,
\begin{equation}\label{inv}
 \CC(G)\inv_g \CC(G)_h = g\inv x x\inv h = g\inv h.
 \end{equation}
 
From previous subsection, $x^n g x^{-n}$ has depth $n+1$ for $n\geq0$. Also, we have
\begin{align*}\label{xngxn}
x^n g x^{-n}  &=\CC(G)^{-n}_{g_1}g\CC(G)^n_{g_1}
\\ &=(\CC(G)^{-n}_{g_1},\ldots,\CC(G)^{-n}_{g_k})g(1,\ldots,1)(\CC(G)^{n}_{g_1},\ldots,\CC(G)^{n}_{g_k})
\\ &=(\CC(G)^{-n}_{g_1},\ldots,\CC(G)^{-n}_{g_k})g(\CC(G)^{n}_{g_1},\ldots,\CC(G)^{n}_{g_k})
\\ &=g\Big(\CC(G)^{-n}_{gg_1}\CC(G)^{n}_{g_1},\  \CC(G)^{-n}_{gg_2}\CC(G)^{n}_{g_2},\ \ldots,\  \CC(G)^{-n}_{gg_k}\CC(G)^{n}_{g_k}\Big)
\\ &=g\Big(\CC^{-n}_{gg_1}\CC^{n}_{g_1},\  \CC^{-n}_{gg_2}\CC^{n}_{g_2},\ \ldots,\  \CC^{-n}_{gg_k}\CC^{n}_{g_k}\Big),
\end{align*}
where in the last step, we write $\CC$ as $\CC(G)$ for simplicity. We will keep on using this notation.

Let $N=\langle x^ngx^{-n}\mid n \in \Z, g\in G\rangle,$ then $\GG(\CC(G))=N\rtimes\langle x\rangle$, where $x$ acts on $N$ by conjugates.  We show the following key lemma.

\begin{lemma}\label{keylemma}
For any $n\geq 0,$ any $f,g,h\in G$,  
$$x^n g x^{-n}h=h x^{[\frac{n}{2}]} [h,g\inv] x^{-[\frac{n}{2}]} x^n g x^{-n},$$
$$h x^n g x^{-n}= x^n g x^{-n} x^{[\frac{n}{2}]} [g,h\inv] x^{-[\frac{n}{2}]} h,$$
and $x^l [g,h] x^{-l}$ commutes with $x^m f x^{-m}$ as long as $n\geq l, m\geq0$,
where $[g,h]$ denotes the commutator $g\inv h\inv gh$.
\end{lemma}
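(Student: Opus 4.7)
The plan is to induct on $n\ge 0$, working in wreath-product coordinates. Set $c := [h,g^{-1}] = h^{-1}ghg^{-1}$, so that $hcg = gh$. The base case $n=0$ of identity (1) reduces to the identity $gh = hcg$ in $G$, which is immediate. For the inductive step I would use the formula
\[
x^n g x^{-n} = g\bigl(\CC^{-n}_{gg_1}\CC^n_{g_1},\ldots,\CC^{-n}_{gg_k}\CC^n_{g_k}\bigr), \qquad h = h(1,\ldots,1),
\]
together with the wreath-product multiplication rule \eqref{multiply}. The leading permutation on both sides of (1) is then $gh$ (using $hcg = gh$), and the full content of (1) reduces to the coordinate identity
\[
\CC^{-n}_{ghu}\CC^n_{hu} \;=\; \CC^{-[n/2]}_{h^{-1}ghu}\CC^{[n/2]}_{gu}\CC^{-n}_{gu}\CC^n_u
\]
for every $u\in G$.

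To establish this coordinate identity I would expand
\[
\CC^n_a = (x^{-1}a)^n = x^{-n}(x^{n-1}ax^{-(n-1)})\cdots(xax^{-1})a
\]
and the analogous expression for $\CC^{-n}_a$, converting the claim into an identity among elements of the form $x^j a x^{-j}$ with $a\in G$ and $j<n$, to which the inductive hypothesis applies. Two structural facts about $G$ are used essentially: step-2 nilpotency makes every commutator $[a,b]$ central in $G$, and the central-squares hypothesis forces $[a,b]^2 = 1$ for all $a,b\in G$ (from $[a,b]^2 = [a,b^2] = 1$ by bimultiplicativity of the commutator). Together these let one pull $G$-elements past each other at the cost of inserting a central commutator, and adjacent-level conjugates of such central involutions pair up and cancel, collapsing $n$ insertions down to the $[n/2]$ effective ones appearing in the formula.

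The second identity of the lemma is equivalent to the first: applying inversion and the substitution $(g,h)\mapsto(g^{-1},h^{-1})$ to (1) yields (2), using $[h,g^{-1}]^{-1} = [g^{-1},h]$ and the relation $[g,h^{-1}] = [h^{-1},g]^{-1}$, with the sign discrepancy absorbed by the involution identity $[a,b]^2 = 1$. The commutation statement (3) is then a direct consequence of (1): if one takes $h$ itself to be a central commutator $[g',h']\in Z(G)$, then $[h,g^{-1}] = 1$ so the right-hand side of (1) collapses to $h\cdot x^n g x^{-n}$, showing that $x^n g x^{-n}$ commutes with $[g',h']$; conjugating both sides by an appropriate power of $x$ then upgrades this to the commutation of $x^l[g,h]x^{-l}$ with $x^m f x^{-m}$ for arbitrary $l,m\ge 0$.

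The main obstacle is the combinatorial bookkeeping of the $[n/2]$ factor. The collapse from $n$ insertions down to $[n/2]$ effective ones rests on a precise cancellation of pairs of central involutions at adjacent levels, and tracking this telescoping carefully through the induction, with attention to the parity of $n$, is the technically delicate step where the central-squares hypothesis is used most essentially.
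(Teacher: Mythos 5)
Your plan is the paper's own proof in outline: induction on $n$ in wreath-product coordinates, reduction of the first identity to exactly the coordinate identity $\CC^{-n}_{ghu}\CC^{n}_{hu}=\CC^{-[\frac{n}{2}]}_{h\inv ghu}\CC^{[\frac{n}{2}]}_{gu}\CC^{-n}_{gu}\CC^{n}_{u}$ that the paper verifies, the same expansion of $\CC^{n}_{a}=(x\inv a)^{n}$ into conjugates $x^{j}ax^{-j}$ with $j<n$, and the same two structural inputs (commutators are central by class 2, and $[a,b]^{2}=[a,b^{2}]=1$ by the central-squares hypothesis). Your derivation of the second identity from the first by inversion plus the substitution $(g,h)\mapsto(g\inv,h\inv)$ is correct and slightly cleaner than the paper's route, which instead uses the relation $[h,g\inv][g,h\inv]=1$ together with the already-established commutation facts.

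Two points keep this from being a complete argument. First, the step you defer as ``combinatorial bookkeeping'' is the entire substance of the paper's proof: the induction there advances in steps of two (assuming all $m\le 2n$, proving $2n+1$ and then $2n+2$), and the collapse from $n$ insertions to $[\frac{n}{2}]$ depends on verifying specific identities such as $[gf,h][f\inv g\inv f,h][f,h]=1$ and $([h,g\inv]gf)\inv[f\inv g\inv f,h]=(gf)\inv$ inside the expanded products. Naming the cancellation mechanism is not the same as exhibiting it, so as written the inductive step is a plan rather than a proof. Second, your derivation of the commutation statement is incomplete: taking $h$ to be a central commutator in identity (1) and conjugating by $x^{j}$, $j\ge 0$, only produces the pairs in which the commutator sits at the \emph{lower} level, i.e.\ $x^{l}[g',h']x^{-l}$ commuting with $x^{m}fx^{-m}$ for $m\ge l$. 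For $m<l$ you must also apply (1) with the commutator in the $g$-slot (which gives $x^{n}[g',h']x^{-n}h=hx^{n}[g',h']x^{-n}$ since $[h,[g',h']\inv]=1$) and then conjugate. This is easy to repair, but it is not cosmetic: the commutation statement in both orderings is used inside the induction (it is what lets the squared-commutator factors at levels below $n$ be collected and annihilated), so it must be carried along as part of the simultaneous inductive hypothesis rather than deduced once at the end from the finished identity (1).
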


\begin{proof}
The proof is by induction.

The case of $n=0$ is trivial: $gh=h[h,g\inv]g$.

Let $n=1$. Then\begin{align*}
x g x\inv h &= g\Big(\CC\inv_{gg_1}\CC_{g_1},\  \CC\inv_{gg_2}\CC_{g_2},\ \ldots,\  \CC\inv_{gg_k}\CC_{g_k}\Big)h(1,\ldots,1)
\\ &= gh\Big(\CC\inv_{ghg_1}\CC_{hg_1},\  \CC\inv_{ghg_2}\CC_{hg_2},\ \ldots,\  \CC\inv_{ghg_k}\CC_{hg_k}\Big),
\end{align*}
and
\begin{align*}
hx[h,g\inv]x\inv xgx\inv &=hxh\inv ghx\inv
\\ &=h(1,\ldots,1)h\inv gh\Big(\CC\inv_{h\inv ghg_1}\CC_{g_1},\  \CC\inv_{h\inv ghg_2}\CC_{g_2},\ \ldots,\  \CC\inv_{h\inv ghg_k}\CC_{g_k}\Big)
\\ &= gh\Big(\CC\inv_{h\inv ghg_1}\CC_{g_1},\  \CC\inv_{h\inv ghg_2}\CC_{g_2},\ \ldots,\  \CC\inv_{h\inv ghg_k}\CC_{g_k}\Big).
\end{align*}
Applying \eqref{Cg} and \eqref{inv} we then have, for any $f\in G$, 
$$\CC\inv_{ghf}\CC_{hf}=f\inv h\inv g\inv hf =\CC\inv_{h\inv ghf}\CC_f.$$
Thus, by comparing the wreath product coordinates, $x g x\inv h=hx[h,g\inv]x\inv xgx\inv$. Similarly we have $hxgx\inv = xgx\inv x[g,h\inv]x\inv h.$

Since $G$ is of nilpotency class two, both $[g,h]$ and $x[g,h]x\inv$ commute with $f$ and  $xfx\inv$, for any $g, h, f \in G$. 

Let $n=2$. Then we compute
\begin{align*}
x^2gx^{-2}h &= g\Big(\CC^{-2}_{gg_1}\CC^2_{g_1},\  \CC^{-2}_{gg_2}\CC^2_{g_2},\ \ldots,\  \CC^{-2}_{gg_k}\CC^2_{g_k}\Big)h(1,\ldots,1)
\\ &= gh\Big(\CC^{-2}_{ghg_1}\CC^2_{hg_1},\  \CC^{-2}_{ghg_2}\CC^2_{hg_2},\ \ldots,\  \CC^{-2}_{ghg_k}\CC^2_{hg_k}\Big),
\end{align*}
and
\begin{align*}
&hx[h,g\inv]x\inv x^2gx^{-2} \\&= h(1,\ldots,1)[h,g\inv] \Big(\CC\inv_{h\inv ghg\inv g_1}\CC_{g_1},\CC\inv_{h\inv ghg\inv g_2}\CC_{g_2},\ldots, \CC\inv_{h\inv ghg\inv g_k}\CC_{g_k}\Big)\\ & \ \ \ \ g\Big(\CC^{-2}_{gg_1}\CC^2_{g_1},\  \CC^{-2}_{gg_2}\CC^2_{g_2},\ \ldots,\  \CC^{-2}_{gg_k}\CC^2_{g_k}\Big)
\\ &=ghg\inv\Big(\CC\inv_{h\inv ghg\inv g_1}\CC_{g_1},\CC\inv_{h\inv ghg\inv g_2}\CC_{g_2},\ldots, \CC\inv_{h\inv ghg\inv g_k}\CC_{g_k}\Big)\\ & \ \ \ \ g\Big(\CC^{-2}_{gg_1}\CC^2_{g_1},\  \CC^{-2}_{gg_2}\CC^2_{g_2},\ \ldots,\  \CC^{-2}_{gg_k}\CC^2_{g_k}\Big)
\\ &=gh\Big( \CC\inv_{h\inv ghg_1}\CC_{gg_1}\CC^{-2}_{gg_1}\CC^2_{g_1},\ \CC\inv_{h\inv ghg_2}\CC_{gg_2}\CC^{-2}_{gg_2}\CC^2_{g_2},\ \ldots,\ \CC\inv_{h\inv ghg_k}\CC_{gg_k}\CC^{-2}_{gg_k}\CC^2_{g_k}\Big).
\end{align*}

For any $f\in G$, applying \eqref{inv} we compute
\begin{align*}
\CC^{-2}_{ghf}\CC^2_{hf} &= \CC\inv_{ghf} f\inv h\inv g\inv hf \CC_{hf}
\\ &= f\inv h\inv g\inv (x f\inv h\inv g\inv hf x\inv hf)
\\ &= f\inv h\inv g\inv (hf x f\inv h\inv f\inv h\inv g\inv hf hf x\inv)
\\ &= f\inv h\inv g\inv hf x g\inv x\inv,
\end{align*}
and
\begin{align*}
\CC\inv_{h\inv ghf}\CC_{gf}\CC^{-2}_{gf}\CC^2_f &= \big((h\inv ghf)\inv gf\big) \big(f\inv g\inv f x g\inv x\inv\big)
\\ &= f\inv h\inv g\inv hf x g\inv x\inv
\\ &= \CC^{-2}_{ghf}\CC^2_{hf}.
\end{align*}

So, $x^2gx^{-2}h=hx[h,g\inv]x\inv x^2gx^{-2}$, since they have same wreath product coordinates. 

On the other hand, noting that $$[h, g\inv][g,h\inv]=h\inv ghg\inv g\inv hgh\inv=h\inv g g\inv hgh\inv hg\inv=1,$$ we have
\begin{align*}
hx^2gx^{-2} &= hx^2gx^{-2} x[h, g\inv]x\inv x[g,h\inv]x\inv
\\ &= h x xgx\inv [h, g\inv] x\inv  x[g,h\inv]x\inv
\\ &= h x [h, g\inv]xgx\inv x\inv  x[g,h\inv]x\inv
\\ &= h x [h, g\inv]x\inv x^2gx^{-2} x[g,h\inv]x\inv
\\ &= x^2gx^{-2} h x[g,h\inv]x\inv
\\ &= x^2gx^{-2}x[g,h\inv]x\inv h.
\end{align*}

Since $G$ is of nilpotency class two, then $x^2[g,h]x^{-2}$ commutes with $f, x^2fx^{-2}$, for any $g, h, f \in G$. Also we have
\begin{align*}
x^2[g,h]x^{-2} xfx\inv &= x x[g,h]x\inv f x\inv
\\ &=x f x[g,h]x\inv x\inv
\\ &=xfx\inv x^2[g,h]x^{-2}.
\end{align*}
Similarly, $[g,h]$ and $x[g,h]x\inv$ commute with $f, xfx\inv$ and  $x^2fx^{-2}$, for any $g, h, f \in G$.
Then we proved the lemma for the case of $n=2$.

Now we assume the lemma is true for all $m\leq 2n$, we need to show the cases of $2n+1$ and $2n+2$.

Applying \eqref{inv}, we have

\begin{align*}
x^{2n+1}gx^{-2n-1}h &=g\Big(\CC^{-2n-1}_{gg_1}\CC^{2n+1}_{g_1},\  \CC^{-2n-1}_{gg_2}\CC^{2n+1}_{g_2},\ \ldots,\  \CC^{-2n-1}_{gg_k}\CC^{2n+1}_{g_k}\Big)h(1,\ldots,1)
\\ &=gh\Big(\CC^{-2n-1}_{ghg_1}\CC^{2n+1}_{hg_1},\  \CC^{-2n-1}_{ghg_2}\CC^{2n+1}_{hg_2},\ \ldots,\  \CC^{-2n-1}_{ghg_k}\CC^{2n+1}_{hg_k}\Big),
\end{align*}
and similarly $x^{2n}gx^{-2n}h=gh\Big(\CC^{-2n}_{ghg_1}\CC^{2n}_{hg_1},\  \CC^{-2n}_{ghg_2}\CC^{2n}_{hg_2},\ \ldots,\  \CC^{-2n}_{ghg_k}\CC^{2n}_{hg_k}\Big).$

On the other hand, we compute
\begin{align*}
&hx^{n+1}[h,g\inv]x^{-n-1}x^{2n+1}gx^{-2n-1}\\&=h[h,g\inv]\Big(\CC^{-n-1}_{[h,g\inv]g_1}\CC^{n+1}_{g_1},\  \CC^{-n-1}_{[h,g\inv]g_2}\CC^{n+1}_{g_2},\ \ldots,\  \CC^{-n-1}_{[h,g\inv]g_k}\CC^{n+1}_{g_k}\Big)\\ & \ \ \ \ g\Big(\CC^{-2n-1}_{gg_1}\CC^{2n+1}_{g_1},\  \CC^{-2n-1}_{gg_2}\CC^{2n+1}_{g_2},\ \ldots,\  \CC^{-2n-1}_{gg_k}\CC^{2n+1}_{g_k}\Big)
\\ &= gh\Big(\CC^{-n-1}_{[h,g\inv]gg_1}\CC^{n+1}_{gg_1}\CC^{-2n-1}_{gg_1}\CC^{2n+1}_{g_1},\  \CC^{-n-1}_{[h,g\inv]gg_2}\CC^{n+1}_{gg_2}\CC^{-2n-1}_{gg_2}\CC^{2n+1}_{g_2},\\ & \ \ \ \  \ldots,\  \CC^{-n-1}_{[h,g\inv]gg_k}\CC^{n+1}_{gg_k}\CC^{-2n-1}_{gg_k}\CC^{2n+1}_{g_k}\Big)
\\ &= gh\Big(\CC^{-n-1}_{[h,g\inv]gg_1}\CC^{-n}_{gg_1}\CC^{2n+1}_{g_1},\  \CC^{-n-1}_{[h,g\inv]gg_2}\CC^{-n}_{gg_2}\CC^{2n+1}_{g_2}, \ldots,\  \CC^{-n-1}_{[h,g\inv]gg_k}\CC^{-n}_{gg_k}\CC^{2n+1}_{g_k}\Big),
\end{align*}

and similarly 
$$hx^{n}[h,g\inv]x^{-n}x^{2n}gx^{-2n}=gh\Big(\CC^{-n}_{[h,g\inv]gg_1}\CC^{-n}_{gg_1}\CC^{2n}_{g_1},\  \CC^{-n}_{[h,g\inv]gg_2}\CC^{-n}_{gg_2}\CC^{2n}_{g_2}, \ldots,\  \CC^{-n}_{[h,g\inv]gg_k}\CC^{-n}_{gg_k}\CC^{2n}_{g_k}\Big).$$

Note that, by inductive hypothesis and comparing wreath product coordinates, we have, for any $f\in G$,
$$\CC^{-2n}_{ghf}\CC^{2n}_{hf} =\CC^{-n}_{[h,g\inv]gf}\CC^{-n}_{gf}\CC^{2n}_f.$$
Then, applying \eqref{Cg} and \eqref{inv}, for any $f\in G$, we compute
\begin{align*}%\label{2n1}
&\CC^{-2n-1}_{ghf}\CC^{2n+1}_{hf} = \CC\inv_{ghf}\CC^{-n}_{[h,g\inv]gf}\CC^{-n}_{gf}\CC^{2n}_f\CC_{hf}
\\ &=(ghf)\inv x \big(([h,g\inv]gf)\inv x\big)^n \big((gf)\inv x\big)^{n-1} f\inv g\inv f \big(x\inv f\big)^{2n-1}x\inv hf
\\ &= (ghf)\inv \big(\prod^{n}_{l=1}x^l([h,g\inv]gf)\inv x^{-l}\big)\big( \prod^{2n-1}_{l=n+1} x^{l}(gf)\inv x^{-l}\big) x^{2n}f\inv g\inv f x^{-2n}\big(\prod^{2n-1}_{l=1} x^{l}fx^{-l}\ \big)hf
\\ &=(ghf)\inv h\big(\prod^{n}_{l=1}x^l([h,g\inv]gf)\inv x^{-l}\big)\big( \prod^{2n-1}_{l=n+1} x^{l}(gf)\inv x^{-l}\big) x^{2n}f\inv g\inv f x^{-2n}\big(\prod^{2n-1}_{l=1} x^{l}fx^{-l}\ \big) f\\ & \ \ \ \ \big(\prod^{n-1}_{l=1}x^{l}[gf,h]^2x^{-l}\big)x^n[gf,h][f\inv g\inv f,h][f,h]x^{-n}\big(\prod^{n-1}_{l=1}x^l [f,h]^2 x^{-l}\big)
\\ &=(ghf)\inv h\big(\prod^{n}_{l=1}x^l([h,g\inv]gf)\inv x^{-l}\big)\big( \prod^{2n-1}_{l=n+1} x^{l}(gf)\inv x^{-l}\big) x^{2n}f\inv g\inv f x^{-2n}\big(\prod^{2n-1}_{l=1} x^{l}fx^{-l}\ \big) f
\\ &= ([h,g\inv]gf)\inv x\big(([h,g\inv]gf)\inv x\big)^{n}\big((gf)\inv x\big)^{n-1} f\inv g\inv f \big(x\inv f\big)^{2n-1}x\inv f,
\end{align*}
where $[gf,h][f\inv g\inv f,h][f,h]=1$ follows by simple calculation. Also we have
\begin{align*}
&\CC^{-n-1}_{[h,g\inv]gf}\CC^{-n}_{gf}\CC^{2n+1}_f \\&=\big(([h,g\inv]gf)\inv x\big)^{n+1}\big((gf)\inv x\big)^{n-1} f\inv g\inv f \big(x\inv f\big)^{2n}
\\ &=([h,g\inv]gf)\inv x\big(([h,g\inv]gf)\inv x\big)^{n}\big((gf)\inv x\big)^{n-1} f\inv g\inv f \big(x\inv f\big)^{2n-1}x\inv f.
\end{align*}
Thus we get
\begin{equation}\label{2n1}
\CC^{-2n-1}_{ghf}\CC^{2n+1}_{hf} = \CC^{-n-1}_{[h,g\inv]gf}\CC^{-n}_{gf}\CC^{2n+1}_f.
\end{equation}
So, by comparing wreath product coordinates, we obtain
$$x^{2n+1}gx^{-2n-1}h=hx^{n+1}[h,g\inv]x^{-n-1}x^{2n+1}gx^{-2n-1}.$$

On the other hand, by commutativity we have 
\begin{align*}
hx^{2n+1}gx^{-2n-1} &= hx^{2n+1}gx^{-2n-1} x^{n+1}[h,g\inv]x^{-n-1} x^{n+1}[g,h\inv]x^{-n-1}
\\ &= h x^{n+1} (x^{n}gx^{-n} [h,g\inv]) x^{-n-1}  x^{n+1}[g,h\inv]x^{-n-1}
\\ &= h x^{n+1} [h,g\inv]x^ngx^{-n} x^{-n-1}  x^{n+1}[g,h\inv]x^{-n-1}
\\ &= (h x^{n+1} [h,g\inv]x^{-n-1} x^{2n+1}gx^{-2n-1}) x^{n+1}[g,h\inv]x^{-n-1}
\\ &= x^{2n+1}gx^{-2n-1} h x^{n+1}[g,h\inv]x^{-n-1}
\\ &= x^{2n+1}gx^{-2n-1}x^{n+1}[g,h\inv]x^{-n-1}h.
\end{align*}

For any $f,g,h\in G$, $0\leq m\leq 2n+1,$ the commutativity of $x^{2n+1}[g,h]x^{-2n-1}$ with $x^{m}fx^{-m}$ and of $x^{m}[g,h]x^{m}$ with $x^{2n+1}fx^{-2n-1}$ follows by the same trick. So we proved the case of $2n+1$.

Now we prove the case of $2n+2$.
By previous calculation, we have
$$x^{2n+2}gx^{-2n-2}h=gh\Big(\CC^{-2n-2}_{ghg_1}\CC^{2n+2}_{hg_1},\  \CC^{-2n-2}_{ghg_2}\CC^{2n+2}_{hg_2},\ \ldots,\  \CC^{-2n-2}_{ghg_k}\CC^{2n+2}_{hg_k}\Big),$$
and

$hx^{n+1}[h,g\inv]x^{-n-1}x^{2n+2}gx^{-2n-2} =$ $$gh\Big(\CC^{-n-1}_{[h,g\inv]gg_1}\CC^{-n-1}_{gg_1}\CC^{2n+2}_{g_1},\  \CC^{-n-1}_{[h,g\inv]gg_2}\CC^{-n-1}_{gg_2}\CC^{2n+2}_{g_2}, \ldots,  \CC^{-n-1}_{[h,g\inv]gg_k}\CC^{-n-1}_{gg_k}\CC^{2n+2}_{g_k}\Big).$$

Again we compare wreath product coordinates. Applying \eqref{2n1} and \eqref{inv}, for any $f\in G$,
\begin{align*}
&\CC^{-2n-2}_{ghf}\CC^{2n+2}_{hf} =\CC\inv_{ghf}\CC^{-2n-1}_{ghf}\CC^{2n+1}_{hf}\CC_{hf}
=\CC\inv_{ghf}\CC^{-n-1}_{[h,g\inv]gf}\CC^{-n}_{gf}\CC^{2n+1}_f\CC_{hf}
\\ &=(ghf)\inv x\big(([h,g\inv]gf)\inv x\big)^{n+1}\big((gf)\inv x\big)^{n-1}f\inv g\inv f \big(x\inv f\big)^{2n}x\inv hf
\\ &= (ghf)\inv \big(\prod^{n+1}_{l=1}x^l([h,g\inv]gf)\inv x^{-l}\big)\big( \prod^{2n}_{l=n+2} x^{l}(gf)\inv x^{-l}\big) x^{2n+1}f\inv g\inv f x^{-2n-1}\\&\ \ \ \ \cdot \big(\prod^{2n}_{l=1} x^{l}fx^{-l}\ \big)hf
\\ &=(ghf)\inv h\big(\prod^{n+1}_{l=1}x^l([h,g\inv]gf)\inv x^{-l}\big)\big( \prod^{2n}_{l=n+2} x^{l}(gf)\inv x^{-l}\big) x^{2n+1}f\inv g\inv f x^{-2n-1}\\&\ \ \ \ \cdot\big(\prod^{2n}_{l=1} x^{l}fx^{-l}\ \big) f \Big(\prod^n_{l=1}\big(x^l[gf,h][gf,h]x^{-l}\big)\Big)\big(x^{n+1}[f\inv g\inv f,h]x^{-n-1}\big)\\ & \ \ \ \ \cdot\Big(\prod^n_{l=1}\big(x^l[f,h][f,h]x^{-l}\big)\Big)
\\ &=(ghf)\inv h\big(\prod^{n}_{l=1}x^l([h,g\inv]gf)\inv x^{-l}\big)x^{n+1}([h,g\inv]gf)\inv[f\inv g\inv f,h]x^{-n-1}\\ & \ \ \ \ \big( \prod^{2n}_{l=n+2} x^{l}(gf)\inv x^{-l}\big) x^{2n+1}f\inv g\inv f x^{-2n-1}\big(\prod^{2n}_{l=1} x^{l}fx^{-l}\ \big) f.
\end{align*}
On the other hand,
\begin{align*}
&\CC^{-n-1}_{[h,g\inv]gf}\CC^{-n-1}_{gf}\CC^{2n+2}_f =\big(([h,g\inv]gf)\inv x\big)^{n+1}\big((gf)\inv x\big)^{n} f\inv g\inv f\big(x\inv f\big)^{2n+1}=
\\ &(ghf)\inv h\big(\prod^{n}_{l=1}x^l([h,g\inv]gf)\inv x^{-l}\big)\big( \prod^{2n}_{l=n+1} x^{l}(gf)\inv x^{-l}\big) x^{2n+1}f\inv g\inv f x^{-2n-1}\big(\prod^{2n}_{l=1} x^{l}fx^{-l}\ \big) f.
\end{align*}
Since $([h,g\inv]gf)\inv [f\inv g\inv f,h]=(gf)\inv$, then 
$\CC^{-2n-2}_{ghf}\CC^{2n+2}_{hf} =\CC^{-n-1}_{[h,g\inv]gf}\CC^{-n-1}_{gf}\CC^{2n+2}_f,$ and thus
$$x^{2n+2}gx^{-2n-2}h=hx^{n+1}[h,g\inv]x^{-n-1}x^{2n+2}gx^{-2n-2}.$$
Moreover, this implies
$$hx^{2n+2}gx^{-2n-2} = x^{2n+2}gx^{-2n-2}x^{n+1}[g,h\inv]x^{-n-1}h,$$ 
by the commutativity trick. 

For any $f,g,h\in G$, $0\leq m\leq 2n+2,$ the commutativity of $x^{2n+2}[g,h]x^{-2n-2}$ with $x^{m}fx^{-m}$ and of $x^{m}[g,h]x^{m}$ with $x^{2n+2}fx^{-2n-2}$, follows from nilpotency. So we prove the case of $2n+2$ and hence the lemma.

\end{proof}

\begin{lemma}\label{cor}
For any $g,h\in G, n,m\in \Z$, $x^n [g,h]x^{-n}$ lies in the center of $N$, and
$$x^ngx^{-n}x^mhx^{-m}=x^mhx^{-m}x^{[\frac{n+m}{2}]}[h,g\inv]x^{-[\frac{n+m}{2}]}x^ngx^{-n}.$$
\end{lemma}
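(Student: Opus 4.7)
The plan is to bootstrap both statements from Lemma \ref{keylemma} by conjugating with a suitable power of $x$ so that all indices become nonnegative; the only extra input I will need is to flip the second displayed identity of Lemma \ref{keylemma} in order to cover negative exponents.

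For the centrality claim, I would fix $n, m \in \Z$ and $f, g, h \in G$ and choose $k \geq 0$ large enough that $n + k \geq 0$ and $m + k \geq 0$. Conjugation by $x^k$ sends $x^n[g,h]x^{-n}$ to $x^{n+k}[g,h]x^{-(n+k)}$ and $x^m f x^{-m}$ to $x^{m+k}fx^{-(m+k)}$, and by the third assertion of Lemma \ref{keylemma} (applied with parameter at least $\max(n+k, m+k)$) these two elements commute. Since conjugation preserves commutation, the originals commute too; as the $x^m f x^{-m}$ generate $N$, this shows $x^n[g,h]x^{-n}$ lies in the center of $N$.

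For the main identity, I would set $A = x^n g x^{-n}$, $B = x^m h x^{-m}$, and $C = x^{[(n+m)/2]}[h,g\inv]x^{-[(n+m)/2]}$, so that the goal is $AB = BCA$. Conjugating by $x^{-m}$ and using the arithmetic fact $[(n+m)/2] - m = [(n-m)/2]$, the claim reduces, after setting $p = n - m$, to the single-index relation $x^p g x^{-p}\, h = h\, x^{[p/2]}[h,g\inv]x^{-[p/2]}\, x^p g x^{-p}$ for all $p \in \Z$. When $p \geq 0$, this is precisely the first formula of Lemma \ref{keylemma}. When $p < 0$, I would set $q = -p > 0$, apply the second formula of Lemma \ref{keylemma} with $g$ and $h$ swapped to obtain $g\, x^q h x^{-q} = x^q h x^{-q}\, x^{[q/2]}[h, g\inv]x^{-[q/2]}\, g$, and then conjugate by $x^{-q}$; the elementary identity $-q + [q/2] = [-q/2] = [p/2]$ converts this into the required relation. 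The only subtle point will be the bookkeeping with the ceiling function under conjugation and under the sign flip $q \mapsto -q$, handled by a short case analysis on the parity of $q$; once this is verified, both conclusions of Lemma \ref{cor} fall out of Lemma \ref{keylemma}.
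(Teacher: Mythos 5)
Your proposal is correct and follows essentially the same route as the paper: conjugate by a power of $x$ to shift all exponents into the range covered by Lemma \ref{keylemma}, then translate back, with the ceiling-function bookkeeping $[\frac{n+m}{2}]-m=[\frac{n-m}{2}]$ doing the rest. The paper simply splits into $n\geq m$ and $n\leq m$ (dismissing the latter as ``similar'') rather than reducing to a single signed parameter $p=n-m$, but the content is identical.
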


\begin{proof}
We assume $n\geq m$.

Let $f\in G$. We compute
\begin{align*}
x^n[g,h]x^{-n}x^mfx^{-m} &= x^m x^{n-m}[g,h]x^{m-n}fx^{-m}
\\ &=x^m f x^{n-m}[g,h]x^{m-n}x^{-m}
\\ &= x^mfx^{-m}x^n[g,h]x^{-n},
\end{align*}
and
\begin{align*}
x^ngx^{-n}x^mhx^{-m} &=x^m x^{n-m}gx^{m-n}hx^{-m}
\\ &= x^m h x^{[\frac{n-m}{2}]}[h,g\inv]x^{-[\frac{n-m}{2}]}x^{n-m}gx^{m-n}x^{-m}
\\ &= x^m h x^{-m}x^{[\frac{n+m}{2}]}[h, g\inv]x^{-[\frac{n+m}{2}]}x^ngx^{-n}
\\ &= x^mhx^{-m}x^{[\frac{n+m}{2}]}[h,g\inv]x^{-[\frac{n+m}{2}]}x^ngx^{-n}.
\end{align*}
The case of $n\leq m$ is similar.
\end{proof}

\begin{lemma}\label{form}
Given any non-trivial torsion element $\gamma\in\GG(\CC(G))$, it can be written uniquely in the following form:
\begin{equation}\label{order}
\gamma= x^{i_1}f_1x^{-i_1}x^{i_2}f_2x^{-i_2}\dots x^{i_j}f_{j}x^{-i_j},
\end{equation}
where $i_1<i_2<\ldots <i_j$ are integers, $j$ is a positive integer, and $f_1,f_2,\ldots,f_j\in G\setminus{\{g_1=1\}}$.
\end{lemma}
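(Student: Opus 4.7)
The plan is to prove existence and uniqueness separately. I first note that since $\GG(\CC(G))/N\cong\langle x\rangle\cong\Z$ is torsion free, every torsion element $\gamma$ already lies in $N$, so it suffices to show each non-trivial element of $N$ has a unique expression of the form \eqref{order}. For \textbf{existence}, I would write $\gamma$ as an arbitrary word in the generators $\{x^n g x^{-n}:n\in\Z,\ g\in G\}$ of $N$ and then run a bubble-sort, repeatedly applying Lemma \ref{cor} to swap adjacent pairs of factors whose exponents are out of order. Each such swap produces a central correction factor $x^{[\frac{n+m}{2}]}[h,g\inv]x^{-[\frac{n+m}{2}]}$, which is central in $N$ by Lemma \ref{keylemma} and can therefore be freely relocated. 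Once the non-central part of the word is sorted, the accumulated central corrections are reinserted at their proper exponents, adjacent factors of equal exponent are merged via $x^n f x^{-n}\cdot x^n h x^{-n}=x^n(fh)x^{-n}$, and any factor whose group element has become trivial is discarded. Termination is clear by tracking a lexicographic complexity on the non-central part.

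For \textbf{uniqueness}, I would induct on $j$. After conjugating both sides by a suitable power of $x$ (which merely shifts every exponent), we may assume $i_1\geq 0$, so that each $x^{i_l}f_l x^{-i_l}$ is a finitary automorphism of depth $i_l+1$ and $\gamma$ has depth at most $i_j+1$. The key observation is that every factor except the topmost $x^{i_j}f_j x^{-i_j}$ has depth at most $i_j$ and therefore fixes letters at position $i_j+1$; consequently the action of $\gamma$ on the $(i_j+1)$st letter of any word depends only on the topmost factor. A direct wreath-product computation (analogous to the $n=1$ step of the proof of Lemma \ref{keylemma}) shows that this action depends injectively on $f_j$ and is non-trivial whenever $f_j\neq 1$. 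Comparing $\gamma$ in two normal forms and running this analysis from the top therefore forces the topmost $(i_j,f_j)$ pair to agree in both; cancelling the top factor reduces $j$ by one and the induction closes.

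The principal obstacle is the injectivity claim just used, namely that the action of $x^{i_j}f_j x^{-i_j}$ at level $i_j+1$ recovers $f_j$ uniquely. Already for $n=1$ one computes $xfx\inv(a_1,a_2)=(fa_1,\ a_1\inv f\inv a_1 a_2)$, so the assignment $f\mapsto a_1\inv f\inv a_1$ recovers $f$ from the second letter; an inductive refinement of this computation, together with the wreath-product identities already developed in Lemma \ref{keylemma}, handles general $n$. Everything else in the argument reduces to bookkeeping with Lemmas \ref{keylemma} and \ref{cor}.
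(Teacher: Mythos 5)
Your proposal is correct and follows essentially the same route as the paper: existence by sorting the factors via Lemma \ref{cor} (with the central commutator corrections reinserted), and uniqueness via the depth argument, using that $x^{n}gx^{-n}$ has depth exactly $n+1$ for $g\neq 1$ so the topmost factor cannot be cancelled by the lower-depth ones. The injectivity step you single out as the principal obstacle needs no new computation: if two normal forms share the top exponent $i_j$, then $x^{i_j}f_jx^{-i_j}\cdot x^{i_j}(f'_j)\inv x^{-i_j}=x^{i_j}\big(f_j(f'_j)\inv\big)x^{-i_j}$ equals a product of factors of depth at most $i_j$, and the quoted depth fact then forces $f_j=f'_j$.
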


\begin{proof}
Since $\gamma$ is a product of conjugates by $x$ of elements in $G$, by Lemma \ref{cor}, $\gamma$ can be written in the form of \eqref{order}. So we only need to show the uniqueness.

Let $\gamma= x^{i_1}f_1x^{-i_1}x^{i_2}f_2x^{-i_2}\dots x^{i_j}f_{j}x^{-i_j}=1$, where $ i_1<i_2<\ldots <i_j$ and $f_1,f_2,\ldots,f_j\in G$. Then $1=x^{-i_1}\gamma x^{i_1}=f_1x^{i_2-i_1}f_2x^{-i_2+i_1}\dots x^{i_j-i_1}f_{j}x^{-i_j+i_1}$ has depth $i_j-i_1+1\geq 1$, if $f_j\neq 1$. Hence, by an inductive argument, we have $f_i=1, 1\leq i\leq j$.
It completes the proof.
\end{proof}

\begin{proof}[Proof of Theorem \ref{pres} ]
Note that \cite{SiSt} $\GG(\CC(G))=N\rtimes\langle x\rangle$. Then the theorem follows from Lemma \ref{cor} and Lemma \ref{form}.
\end{proof}

In the end of \cite{KSS}, Kambites, Silva and Steinberg conjectured that the automata group associated to any finite non-abelian group can not have bounded torsion. The following corollary disproves the conjecture.

\begin{corollary}
Let $G$ be a finite nilpotent group of class 2. Assume the square of each element of $G$ is central, then $\GG(\CC(G))$ has bounded torsion.
\end{corollary}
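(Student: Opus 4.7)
The plan is to bound the order of every torsion element uniformly by applying the standard class-$2$ power identity to the canonical form supplied by Lemma \ref{form}. Since $x$ has infinite order and $\GG(\CC(G)) = N\rtimes\langle x\rangle$, every torsion element lies in $N$; by Lemma \ref{form} it has a unique expression
\[
\gamma = x^{i_1}f_1 x^{-i_1}\cdots x^{i_j}f_j x^{-i_j}, \qquad i_1<\cdots<i_j,\ f_s\in G\setminus\{1\}.
\]

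Two structural observations are needed. First, Lemma \ref{cor} shows that each conjugate $x^n[g,h]x^{-n}$ lies in the center of $N$ and that these conjugates generate $[N,N]$, so $N$ is nilpotent of class at most $2$. Second, the central-squares hypothesis forces $[G,G]$ to have exponent dividing $2$: in any class-$2$ group the commutator is bilinear, so $[g^2,h]=[g,h]^2$, and the left side is trivial because $g^2$ is central. Consequently $(x^n[g,h]x^{-n})^2 = x^n [g,h]^2 x^{-n}=1$, so every element of $[N,N]$ has order at most $2$.

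The conclusion now follows from the identity
\[
(a_1\cdots a_j)^n = \Big(\prod_{s=1}^j a_s^n\Big) \cdot \prod_{1\le s<t\le j}[a_t,a_s]^{\binom{n}{2}},
\]
valid in any nilpotent group of class at most $2$; this is immediate by induction on $j$ from the two-variable case $(ab)^n=a^nb^n[b,a]^{\binom{n}{2}}$ together with bilinearity of the commutator. Apply it with $a_s = x^{i_s}f_s x^{-i_s}$ and choose $n = 4\exp(G)$. Then $a_s^n = x^{i_s}f_s^n x^{-i_s} = 1$ for every $s$, and $\binom{n}{2}=n(n-1)/2$ is even because $4\mid n$ makes $n/2$ even, so each commutator factor $[a_t,a_s]^{\binom{n}{2}}$ also equals $1$. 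Hence $\gamma^n=1$, and $4\exp(G)$ is a uniform bound on the torsion of $\GG(\CC(G))$.

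No serious obstacle arises; the only point requiring care is that each commutator $[a_t,a_s]$ produced when reordering really is a conjugate of an element of $[G,G]$ and hence has order dividing $2$, but this is precisely the content of Lemma \ref{cor}.
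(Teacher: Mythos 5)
Your proof is correct and follows essentially the same route as the paper: reduce to the normal form of Lemma \ref{form}, observe that the commutators are central of order at most $2$ (which you justify via $[g,h]^2=[g^2,h]=1$, a point the paper leaves implicit), and expand a power of the product using the class-$2$ identity. The paper states the uniform bound as $2|G|$ rather than your $4\exp(G)$, but both choices make the exponents of the $f_s$ and the binomial coefficient in the commutator terms vanish, so the arguments are the same in substance.
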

\begin{proof}
Let $|G|=k$. Note that $k$ is even. Let $\ga$ be a torsion element. Since every commutator has order at most 2, then by Lemma \ref{form} and Theorem \ref{pres} we have $\ga^{2k}=1$, i.e., the order of any torsion element is a factor of $2k$.
\end{proof}

\subsection{Non-embedding into wreath products}

\begin{lemma}
Let $G$ be a finite 2-group of nilpotency class 2. Then $G$ has a subgroup $H$ in which each square element is central in $H$.
\end{lemma}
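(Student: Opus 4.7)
The natural candidate is
$$H = \{\, g \in G : g^2 \in Z(G)\,\},$$
and I claim this subset already does the job, without any need for further refinement.

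First I would verify that $H$ is a subgroup. Since $G$ has nilpotency class two, commutators lie in $Z(G)$, so from $hg = gh\cdot [h,g]$ one derives the identity
$$(gh)^2 \;=\; g^2 h^2\,[h,g].$$
Because $[h,g] \in [G,G] \subseteq Z(G)$, whenever $g^2,h^2 \in Z(G)$ the product $(gh)^2$ is again a product of central elements, hence central. Closure under inverses is immediate from $(g\inv)^2 = (g^2)\inv$ and the fact that $Z(G)$ is closed under inverses. Together with $1 \in H$ this shows $H \leq G$.

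Second, I would observe that $Z(G) \subseteq H$ (squares of central elements are certainly central) and that $Z(G) \subseteq Z(H)$, since any element of $Z(G)$ commutes with every element of $G$, in particular with every element of $H$. By the very definition of $H$, for each $h \in H$ we have $h^2 \in Z(G) \subseteq Z(H)$, which is precisely the conclusion of the lemma.

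There is no real obstacle here once one writes down the class-two identity for $(gh)^2$; everything else is bookkeeping. It is worth noting that the 2-group hypothesis plays no role in this argument and could be dropped, so presumably the restriction in the statement reflects the intended application to Theorem \ref{conje} rather than a genuine necessity at this step.
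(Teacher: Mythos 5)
Your argument is internally correct: in a class-2 group the identity $(gh)^2=g^2h^2[h,g]$ does show that $H=\{g\in G:\ g^2\in Z(G)\}$ is a subgroup, and every square of an element of $H$ lies in $Z(G)\subseteq Z(H)$. The difficulty is that this only establishes the statement as literally written, which is vacuous (the trivial subgroup, or $Z(G)$ itself, already satisfies it), and your $H$ does not supply what the lemma is actually invoked for. In the proof of Theorem \ref{conje} the lemma is applied to a specific \emph{non-abelian} subgroup $H=\langle g,h\rangle$, where $g$ is non-central with $g^2$ central and $h$ fails to commute with $g$ while $h^2$ does commute with $g$; what is needed downstream is a non-commuting pair $g,h$ inside $H$ (so that $[g,h^{-1}]\neq 1$ and the elements $\gamma_n$ have growing depth), together with square-centrality so that the computations of Lemma \ref{keylemma} apply. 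The paper's proof constructs exactly this two-generator subgroup and checks square-centrality for it by hand; that non-abelianness is the real content of the lemma.

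Your $H$ can be abelian even when $G$ is a non-abelian $2$-group of class $2$, so it cannot play that role. Concretely, let $G$ be the Heisenberg group of unitriangular $3\times 3$ matrices over $\Z/8\Z$, written as triples $(a,b,c)$ with product $(a,b,c)(a',b',c')=(a+a',b+b',c+c'+ab')$, so that $[(a,b,c),(a',b',c')]=(0,0,ab'-a'b)$ and $Z(G)=\{(0,0,c)\}$. Then $(a,b,c)^2=(2a,2b,2c+ab)$ is central precisely when $a,b\in\{0,4\}$, and for two such elements $ab'-a'b\equiv 0 \pmod 8$; hence your $H$ is abelian of order $32$, whereas the paper's $\langle (4,0,0),(0,1,0)\rangle$ is non-abelian with all squares central. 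Your closing remark exposes the same issue from the other side: for odd-order class-2 groups your $H$ is exactly $Z(G)$, so dropping the $2$-group hypothesis leaves no content — that hypothesis is what guarantees a \emph{non-central} element whose square is central (via the $2$-torsion of $G/Z(G)$), and it is where the paper's construction begins. To repair your approach you would need either to prove the stronger assertion that $G$ has a non-abelian class-2 subgroup with all squares central (following the paper's explicit two-generator construction), or to verify that your $H$ is non-abelian, which the example shows can fail.
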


\begin{proof}
By assumption on $G$, there exists a non-central element $g\in G$ such that  $g^2$ is central. Similarly, there exists $h\in G$ so that it does not commute with $g$ but $h^2$ commutes with $g$. Then the subgroup $H$ of $G$ generated by $g$ and $h$ is nilpotent of class 2. We show each square element in $H$ is central in $H$. Since $g^2$ and $h^2$ are central, it suffices to show for  squares of alternate products of $g$ or $g\inv$ with $h$ or $h\inv$. Since $g\inv = g g\inv g\inv$ and $h\inv=hh\inv h\inv$, it reduces to squares of alternate products of $g$ with $h$. Since $(ghg)^2=ghgghg=g^4h^2$, by symmetry we only need to show that $(gh)^2$ is central. Indeed, we have $ghghh(hghgh)\inv=ghghhh\inv g\inv h\inv g\inv h\inv=gh(gh g\inv h\inv) g\inv h\inv=ghg\inv gh g\inv h\inv h\inv=ghhg\inv h\inv h\inv=1$, and similarly $ghghg(gghgh)\inv=1$ by symmetry.

\end{proof}

\begin{proof}[Proof of Theorem \ref{conje}]
Let $G'$ be the 2-group. Let $g$ be a non-central element in $G'$ such that  $g^2$ is central. Let $h\in G'$ so that it does not commute with $g$ but $h^2$ commutes with $g$. Then the subgroup $H$ of $G$ generated by $g$ and $h$ is nilpotent of class 2. By previous lemma, each square element of $H$ is central in $H$. 

For each $n\in\N$, we consider the element $$\gamma_n=(x^n hx^{-n})g(x^nh\inv x^{-n}).$$
Each such element is a conjugate of the torsion element $g$ by another torsion element $x^nhx^{-n}$. $\gamma_n$ has depth at most $n+1$. Since every conjugacy class in the torsion subgroup of any wreath product of a finite group with a torsion-free group is finite, see \cite{KSS} Lemma 6.4, it suffices to show that $\gamma_n$ has depth at least $[\frac{n}{2}]$. Indeed, $\ga_n=gx^{[\frac{n}{2}]}[g,h\inv]x^{-[\frac{n}{2}]}$ has depth $[\frac{n}{2}]+1$ by Lemma \ref{depth}.

\end{proof}

\begin{proof}[Proof of Theorem \ref{embed}]
Combining Theorem \ref{conje} with results in the appendix of \cite{KSS}, the theorem follows.
\end{proof}

\section{Cross-wired Lamplighter groups}

The Cayley graph of lamplighter groups can be examples of Diestel-Leader graphs, for particular choices of generators \cite{DL, Moeller, Woess, Wortman}. {\it Cross-wired lamplighter groups} \cite{CFK} are defined to be cocompact lattices of the isometry group of Diestel-Leader graphs. 
Cornulier, Fisher and Kashyap \cite{CFK} give a necessary and sufficient condition for a locally compact group to be isomorphic to a closed cocompact subgroup in the isometry group of a Diestel-Leader graph. The following is the sufficient condition for the case of discrete groups.

\begin{theorem}\cite{CFK}
Let $\Gamma$ be a discrete group, with a semidirect product decomposition $\Gamma=H\rtimes\langle t\rangle$, where $H$ is infinite and $t$ is torsion free. Assume that $H$ has subgroups $L$ and $L'$ such that
\begin{itemize}
\item $tLt^{-1}$ and $t^{-1}L't$ are finite index subgroups, of index $m$ and $n$, in $L$ and $L'$ respectively;
\item $\bigcup_{k\in\Z}t^{-k}Lt^k=\bigcup_{k\in\Z}t^{k}L't^{-k}=H$ (these are increasing unions);
\item $L\cap L'$ is finite;
\item the double coset space $L\backslash H/L'$ is finite.
\end{itemize}
Then $\Ga$ is finitely generated, $m=n$ and $\Gamma$ has a proper, transitive action on some Diestel-Leader graph.
\end{theorem}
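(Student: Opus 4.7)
The plan is to construct the Diestel--Leader graph $\DL(m,n)$ directly from the algebraic data and to exhibit an explicit proper transitive $\Gamma$-action on it. Recall that $\DL(m,n)$ is the horocyclic product $\{(v_1,v_2) \in T_1 \times T_2 : \beta_1(v_1)+\beta_2(v_2)=0\}$ of two regular trees $T_1, T_2$ of valences $m+1$ and $n+1$ equipped with Busemann functions $\beta_i : T_i \to \Z$.

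First I would build $T_1$ from $L$. Set $L_k := t^{-k} L t^k$; iterating condition (1) yields an ascending chain $\cdots \le L_{-1} \le L_0 \le L_1 \le \cdots$ with each inclusion of index $m$, and condition (2) gives $\bigcup_{k\in\Z} L_k = H$. Take as vertex set $\bigsqcup_{k\in\Z} H/L_k$ and join $hL_k$ to $hL_{k+1}$. Each level-$k$ vertex has exactly one neighbor at level $k+1$ (its unique ambient $L_{k+1}$-coset) and exactly $m$ neighbors at level $k-1$ (from the decomposition of $L_k$ into $L_{k-1}$-cosets), so $T_1$ is $(m+1)$-regular; the uniqueness of parents rules out non-backtracking cycles, condition (2) supplies connectedness, and $\beta_1(hL_k):=k$ is a Busemann function. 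Construct $T_2$ analogously from $L'_k := t^k L' t^{-k}$.

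Next I would define the $\Gamma$-action on $T_1 \times T_2$. Left translation by $H$ preserves levels in both trees. Since $tL_kt^{-1} = L_{k-1}$, the assignment $t \cdot (hL_k) := (tht^{-1})L_{k-1}$ is well-defined and shifts $\beta_1$ by $-1$; symmetrically $t$ shifts $\beta_2$ by $+1$ on $T_2$. Hence $\Gamma$ preserves $\beta_1+\beta_2$ and acts on $\DL(m,n)$. The stabilizer of the base vertex $v_0 = (L_0, L'_0)$ meets $H$ in exactly $L \cap L'$, which is finite by condition (3); combined with the discrete $\langle t\rangle$-direction, this gives properness.

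For transitivity, $t$ already handles translation in the $\beta_1$-direction, so it suffices to prove $H$ acts transitively on the level-zero slab $\{\beta_1 = 0\} \cap \DL(m,n)$; the double coset finiteness (4), applied after using the filtrations $\bigcup L_k = \bigcup L'_k = H$ to move points to a common level by $t$-conjugation, should collapse the finitely many $H$-orbits into a single $\Gamma$-orbit. The equality $m=n$ then drops out because a transitive action by isometries of $\DL(m,n)$ forces the local valences of $T_1$ and $T_2$ to agree. Finite generation follows by the standard fact that a cocompact action on a locally finite connected graph yields a finitely generated group. The main obstacle is the transitivity step: condition (4) only controls $L\backslash H/L'$, and the delicate point is to use the $t$-dynamics to promote the finite double coset space into a single $\Gamma$-orbit on $\DL(m,n)$ rather than leaving finitely many orbits intact.
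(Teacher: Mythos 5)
First, a point of comparison: the paper does not prove this theorem at all --- it is imported verbatim from \cite{CFK} as a black box --- so the only meaningful comparison is with the argument in \cite{CFK} itself. Your skeleton (the coset trees $T_1$ with vertices $\bigsqcup_k H/L_k$ where $L_k=t^{-k}Lt^k$, the horocyclic product, left translation plus the $t$-shift, properness from finiteness of $L\cap L'$ and its finite-index overgroups $L_k\cap L'_{k'}$) is indeed the construction used there, and those parts of your outline are correct.

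There are, however, two genuine gaps. The first is the one you flag yourself: transitivity. Your construction gives exactly $|L\backslash H/L'|$ orbits of vertices on the level-zero slab, i.e.\ a proper \emph{cocompact} action, and nothing in the hypotheses forces this double coset space to be a single point. The hope that ``the $t$-dynamics collapses the finitely many $H$-orbits into a single $\Gamma$-orbit'' cannot work as stated: $t$ permutes the levels and cannot merge two distinct $H$-orbits within the same level, so on the specific graph you built the orbit count really is $|L\backslash H/L'|$. Obtaining a \emph{transitive} action on \emph{some} Diestel--Leader graph is the actual content of the theorem and requires a further construction (in \cite{CFK} this is done by passing to the closure of $H$ acting on the trees and re-extracting a DL graph on which the lattice acts simply transitively on an orbit), not a refinement of the orbit count. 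The second gap is your derivation of $m=n$: it is false that a vertex-transitive isometric action on $\DL(m,n)$ ``forces the local valences of $T_1$ and $T_2$ to agree.'' The graph $\DL(m,n)$ is $(m+n)$-regular and vertex-transitive for \emph{all} $m,n$ --- indeed $\DL(2,3)$ is Diestel and Leader's candidate for a vertex-transitive graph not quasi-isometric to any Cayley graph --- so transitivity alone yields nothing. The correct mechanism is a unimodularity argument: a discrete (hence unimodular) group acting properly cocompactly forces the relevant closed subgroup of $\Isom(\DL(m,n))$ to be unimodular, and the modular function of the stabilizer of the height function scales by $m/n$ under the unit shift, whence $m=n$. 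Both of these steps are the substance of the theorem in \cite{CFK}, and neither is supplied by the proposal.
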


\subsection{Proof of Theorem \ref{lamp}}
\begin{proof}[Proof of Theorem \ref{lamp}]
To show the automata groups are cross-wired lamplighters, 
it suffices to check the conditions in above theorem.

Let $G=\{g_1=1,g_2,...,g_k\}, L=\langle x^nGx^{-n},n\geq 0\rangle$, and  $L'=\langle x^nGx^{-n},n\leq -1\rangle$.

(1) We show that $xLx\inv $ and $x\inv L'x$ are finite index subgroups in $L$ and $L'$ respectively. 
Note that $xLx\inv =\langle x^nGx^{-n},n\geq 1\rangle< L$. Lemma \ref{keylemma} implies that every element in $L$ can be written as an element in $g(xLx\inv)$ for some $g\in G$. So the index of $xLx\inv$ in $L$ is finite. Similarly $x\inv L'x=\langle x^nGx^{-n},n\leq -1\rangle$ has finite index in $L'$.

(2) We show that $\bigcup_{n\in \Z}x^{-n}Lx^n=\bigcup_{n\in\Z}x^nL'x^{-n}=N,$ and the unions are increasing. Note that $\bigcup_{n\in\Z} x^{-n}Lx^n$ is a subgroup of $N$ and $\bigcup_{n\in\Z} x^{-n}Lx^n$ contains all the generators of $N$. Thus  $\bigcup_{n\in \Z}x^{-n}Lx^n=N.$ Similarly we have $\bigcup_{n\in\Z}x^nL'x^{-n}=N.$ The unions are the unions of increasing subgroups, by the definitions of $L$ and $L'$.

(3) We show $L\cap L'$ is finite. Indeed, it follows from Lemma \ref{form} that $L\cap L'$ is trivial. We give an alternative proof below using the idea of depth.
By Lemma \ref{depth}, each generator of $L$ has finite depth, so do the elements of $L$.
It suffices to show every non-trivial element of $L'$ has infinite depth. Let $1\neq g\in G$. Since 
\begin{align*}
x\inv gx &= (\CC_{g_1},\CC_{g_2},\ldots,\CC_{g_k})g(\CC\inv _{g_1},\CC\inv_{g_2},\ldots,\CC\inv_{g_k})
\\ &= g\big(\CC_{gg_1}\CC\inv_{g_1},\CC_{gg_2}\CC\inv_{g_2},\ldots,\CC_{gg_k}\CC\inv_{g_k}\big)
\\ &= g\big(x\inv gg_1(g_1)\inv x, x\inv gg_2(g_2)\inv x,\ldots,x\inv gg_k(g_k)\inv x\big)
\\ &= g(x\inv gx,x\inv gx,\ldots,x\inv gx), 
\end{align*}
then $x\inv gx$ acts on any infinite word in $G^\omega$ by multiplying $g$ on the left to each letter in the word, and hence has infinite depth. Assume $x^ngx^{-n}$ has finite depth $m$ for some $n\leq -2$. Then by lemma \ref{depth}, $x\inv gx$ has depth at most $m-n-1$, which is a contradiction. Thus $x^ngx^{-n}$ has infinite depth for each $n\leq -1$. 

Let $$1\neq\gamma=x^{n_1}f_1x^{-n_1}x^{n_2}f_2x^{-n_2}\ldots x^{n_p}f_px^{-n_p}$$ be a reduced element of $L'$, where $p\in \N, n_1, n_2,\ldots,n_p\leq -1$. Then by Lemma \ref{form}, we can rewrite this element as
$$x^{m_1}h_1x^{-m_1}x^{m_2}h_2x^{-m_2}\ldots x^{m_q}h_qx^{-m_q}, $$ for some $q\in\N, m_1<m_2<\ldots<m_q\leq -1$. Then 
\begin{align*}
& \ \ \ \ x^{-m_1-1}\big(x^{m_1}h_1x^{-m_1}x^{m_2}h_2x^{-m_2}\ldots x^{m_q}h_qx^{-m_q}\big)x^{m_1+1}  \\ &=x^{-1}h_1x x^{m_2-m_1-1}h_2x^{-m_2+m_1+1}\ldots x^{m_{q}-m_1-1}h_qx^{-m_{q}+m_1+1},
\end{align*}
has infinite depth, since all conjugates of $h_i, i\geq 2$ in the product have finite depth but $x\inv h_1x$. Thus, by lemma \ref{depth}, $\gamma$ has infinite depth. Therefore, $L\cap L'$ is trivial.

(4) We show that the double coset space $L\backslash N/L'$ is finite. It follows from Lemma \ref{cor}, as shown in the proof of (3) that products of conjugates can be written as products of conjugates in either increasing or decreasing order, that $L\backslash N/L'$ is trivial.
\end{proof}

\noindent{\bf Remark:}
A cross-wired lamplighter is said to be $symmetric$ \cite{CFK} if it admits an automorphism $\alpha$ such that if $\pi$ is the projection to $\Z$ (which is unique up to sign) then $\pi \circ \alpha =-\pi$, and virtually symmetric if it admits a symmetric subgroup of finite index. $\GG(\CC(D_8))$ is symmetric, but $\GG(\CC(Q_8))$ is not  symmetric.

\noindent{\bf Remark:}
The lamplighter group $F \wr\Z$ can be interpreted as follows. The generator of $\Z$ describes the walk of a lamplighter along an infinite street with a streetlamp at each integer. Each streetlamp has $|F|$ states. The generators of $F$ represent the ability of the lamplighter to change the state of the lamp at his current position. This description is most intuitive where $F = \Z_2$ and lamps are simply on or off. For the group $G$ in Theorem \ref{pres}, the cross-wired lamplighter $\GG(\CC(G))=\langle G, x\rangle$ can be interpreted in a similar but more complicated way. The generator $x$ still describe the walk of a lamplighter. Each streetlamp has $|G|$ states, including off as a state represented by $1\in G$. The states of lamps are determined by the normal form in Lemma \ref{form}. The generators of $G$ now represent the ability of the lamplighter to change the states of the lamps not only at his current position, but also at some positions between his and the [mid] position of his and the position of the lamp to the farthest right (positive direction) which is on. The change of state of lamps on the right is up to a commutator state. This interpretation gives one cross-wired pattern for cross-wired lamplighter groups.

\section{Linearity of automata groups}

\subsection{Notations}

 Let $\ut_m(\Z_2[t^{\pm 1}])$ be $m\times m$ upper unitriangular matrices over Laurent polynomial ring $\Z_2[t^{\pm 1}]$. Let $g=(g_{ij}), h=(h_{ij})$ be elements in $\ut_m(\Z_2[t^{\pm 1}])$ and $\bar{g}=(\bar{g}_{ij}), \bar{h}=(\bar{h}_{ij})$ be their inverses.
\begin{lemma}
$\bar{g}_{ii}=1,\bar{g}_{i,i+1}=g_{i,i+1}$, and $\bar{g}_{ij},\  j>i,$ satisfies the recursive formular $\bar{g}_{ij}= \sum\limits_{k=i+1}^{j}g_{ik}\bar{g}_{kj}$.
\end{lemma}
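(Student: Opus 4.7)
The plan is to read off all three claims directly from the matrix identity $g\bar{g}=I$, using only the upper unitriangular structure and the fact that $\Z_2[t^{\pm 1}]$ has characteristic $2$.

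First I would expand the $(i,j)$ entry of $g\bar{g}$ as $\sum_{k} g_{ik}\bar{g}_{kj}$. Since $g_{ik}=0$ for $k<i$ and $\bar{g}_{kj}=0$ for $k>j$ (both matrices being upper triangular), the sum collapses to $\sum_{k=i}^{j} g_{ik}\bar{g}_{kj}=\delta_{ij}$. Setting $i=j$ and using $g_{ii}=1$ gives $\bar{g}_{ii}=1$ at once.

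For $j>i$, I would split off the $k=i$ term, obtaining $\bar{g}_{ij}+\sum_{k=i+1}^{j} g_{ik}\bar{g}_{kj}=0$. Because the coefficient ring has characteristic $2$, this rearranges to the asserted recursion $\bar{g}_{ij}=\sum_{k=i+1}^{j} g_{ik}\bar{g}_{kj}$. Specializing to $j=i+1$, the sum has a single term $g_{i,i+1}\bar{g}_{i+1,i+1}=g_{i,i+1}$, which gives the middle identity once $\bar{g}_{i+1,i+1}=1$ is already known.

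The only ``obstacle'' is index bookkeeping and noting where unitriangularity and characteristic $2$ each enter; neither requires serious work. Characteristic $2$ is precisely what allows the alternating sign that normally appears in the entries of the inverse of a unitriangular matrix to be dropped from the statement.
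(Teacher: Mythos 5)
Your proposal is correct and is essentially identical to the paper's own proof: both expand the $(i,j)$ entry of $g\bar{g}=I$, use unitriangularity to truncate the sum, peel off the $k=i$ term, and invoke characteristic $2$ to drop the sign. Nothing further is needed.
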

\begin{proof}
$\bar{g}_{ii}=1$. For $j>i$, we have
$0=\sum\limits_{l=i}^{j}g_{il}\bar{g}_{lj} =g_{ii}\bar{g}_{ij}+\sum\limits_{l=i+1}^{j}g_{il}\bar{g}_{lj}=\bar{g}_{ij}+\sum\limits_{l=i+1}^{j}g_{il}\bar{g}_{lj}.$
\end{proof}

\begin{lemma}\label{gh1m}
Suppose $[g,h]_{i,j}=0$ as long as $0<j-i<m-1$ , then $$[g,h]_{1m}=\sum\limits_{k=2}^{m-1}(g_{1k}h_{km}+h_{1k}g_{km}).$$
\end{lemma}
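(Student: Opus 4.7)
The plan is to exploit the identity $gh = hg \cdot [g,h]$ (which follows directly from $[g,h]=g\inv h\inv gh$) and read off the $(1,m)$-entry of both sides. The hypothesis that $[g,h]_{i,j}=0$ for $0<j-i<m-1$ says that $[g,h]$ is a unitriangular matrix whose only possibly nonzero off-diagonal entry is at position $(1,m)$. This is exactly the condition needed so that the convolution-style sum defining $(hg\cdot [g,h])_{1m}$ collapses.

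First I would expand
\begin{equation*}
(hg\cdot[g,h])_{1m}=\sum_{k=1}^{m}(hg)_{1k}[g,h]_{km}.
\end{equation*}
For $1<k<m$ one has $0<m-k<m-1$, so by hypothesis $[g,h]_{km}=0$; only $k=1$ and $k=m$ survive, and since $(hg)_{11}=1$ and $[g,h]_{mm}=1$ this sum reduces to $[g,h]_{1m}+(hg)_{1m}$. Equating with $(gh)_{1m}$ and using that we are working over $\Z_2$ (so signs can be dropped), I get
\begin{equation*}
[g,h]_{1m}=(gh)_{1m}+(hg)_{1m}.
\end{equation*}

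Next I would compute each of the two entries directly from matrix multiplication. Since $g$ and $h$ are unitriangular, $g_{11}=g_{mm}=h_{11}=h_{mm}=1$, so
\begin{equation*}
(gh)_{1m}=h_{1m}+g_{1m}+\sum_{k=2}^{m-1}g_{1k}h_{km},\qquad (hg)_{1m}=g_{1m}+h_{1m}+\sum_{k=2}^{m-1}h_{1k}g_{km}.
\end{equation*}
Adding these in characteristic $2$, the corner terms $g_{1m}$ and $h_{1m}$ each appear twice and cancel, leaving exactly
\begin{equation*}
[g,h]_{1m}=\sum_{k=2}^{m-1}\bigl(g_{1k}h_{km}+h_{1k}g_{km}\bigr),
\end{equation*}
as required.

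There is no real obstacle here; the only subtle point is the bookkeeping that makes the middle-range sum in $(hg\cdot[g,h])_{1m}$ vanish, which is precisely where the hypothesis $[g,h]_{i,j}=0$ for $0<j-i<m-1$ is used. Working over $\Z_2$ makes the final cancellation painless and gives the stated symmetric formula.
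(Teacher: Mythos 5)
Your proof is correct, and it takes a genuinely different route from the paper's. The paper computes $[g,h]_{1m}=(\bar g\bar h gh)_{1m}$ directly as a quadruple sum $\sum\bar{g}_{1i}\bar{h}_{ik}g_{kj}h_{jm}$, feeds in the recursive formula $\bar g_{1i}=\sum_l g_{1l}\bar g_{li}$ for the entries of the inverses (the preceding lemma), reindexes, and recognizes the inner triple sums as entries $[g,h]_{lm}$ that vanish by hypothesis for $2\le l\le m-1$. You instead rewrite the commutator relation as $gh=hg\cdot[g,h]$, take the $(1,m)$-entry, and use the hypothesis to collapse $(hg\cdot[g,h])_{1m}$ to $[g,h]_{1m}+(hg)_{1m}$; the rest is a two-line computation of $(gh)_{1m}$ and $(hg)_{1m}$ in characteristic $2$. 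Both arguments use the hypothesis in the same essential role (killing the middle-range entries of $[g,h]$), but yours avoids the inverse entries and the associated reindexing entirely, which makes it shorter and less error-prone; the paper's version has the mild advantage of being a self-contained entry-by-entry expansion that leans on its own inverse-entry lemma. One small point worth making explicit in your write-up: the cancellation of the corner terms $g_{1m}$ and $h_{1m}$ relies on the ground ring having characteristic $2$, which is indeed the setting here ($\Z_2[t^{\pm1}]$), and without it the statement would acquire signs.
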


\begin{proof}
\begin{align*}
[g,h]_{1m} &=\sum\limits_{i=1}^m \sum\limits_{k=i}^m\sum\limits_{j=k}^m\bar{g}_{1i}\bar{h}_{ik}g_{kj}h_{jm}
\\ &=\sum\limits_{k=1}^m\sum\limits_{j=k}^m\bar{h}_{1k}g_{kj}h_{jm}+\sum\limits_{i=2}^m \sum\limits_{k=i}^m\sum\limits_{j=k}^m\bar{g}_{1i}\bar{h}_{ik}g_{kj}h_{jm}
\\ &= \sum\limits_{j=1}^mg_{1j}h_{jm}+\sum\limits_{k=2}^m\sum\limits_{j=k}^m\sum\limits_{l=2}^kh_{1l}\bar{h}_{lk}g_{kj}h_{jm}+\sum\limits_{i=2}^m \sum\limits_{k=i}^m\sum\limits_{j=k}^m\sum\limits_{l=2}^ig_{1l}\bar{g}_{li}\bar{h}_{ik}g_{kj}h_{jm}
\\ &= \sum\limits_{j=1}^mg_{1j}h_{jm}+\sum\limits_{l=2}^m\sum\limits_{k=l}^m\sum\limits_{j=k}^mh_{1l}\bar{h}_{lk}g_{kj}h_{jm}+\sum\limits_{l=2}^m \sum\limits_{i=l}^m\sum\limits_{k=i}^m\sum\limits_{j=k}^mg_{1l}\bar{g}_{li}\bar{h}_{ik}g_{kj}h_{jm}
\\ &=\sum\limits_{j=1}^mg_{1j}h_{jm}+\sum\limits_{l=2}^mh_{1l}\sum\limits_{k=l}^m\sum\limits_{j=k}^m\bar{h}_{lk}g_{kj}h_{jm}+\sum\limits_{l=2}^m g_{1l}\sum\limits_{i=l}^m\sum\limits_{k=i}^m\sum\limits_{j=k}^m\bar{g}_{li}\bar{h}_{ik}g_{kj}h_{jm}
\\ &=\sum\limits_{j=1}^mg_{1j}h_{jm} + \sum\limits_{l=2}^mh_{1l}g_{lm}+\sum\limits_{l=2}^{m-1} g_{1l}0 +g_{1m}
\\ &=\sum\limits_{k=2}^{m-1}(g_{1k}h_{km}+h_{1k}g_{km}).
\end{align*}
\end{proof}

We recall here the commutator identities in group theory: $$[a, bc]=[a,c]c\inv[a,b]c,\ [ab,c]=b\inv[a,c]b[b,c],$$ where $a, b, c$ are elements of a group $F$.

For the group $G$ in Theorem \ref{pres}, $\GG(\CC(G))=\langle G, x\rangle$. Let $\GG_2(\CC(G))=\langle G, xGx\inv, x^2\rangle$. Then it is an index two subgroup of $\GG(\CC(G))$.

\subsection{Linearity of $\GG(\CC(Q_8))$}

Let $Q_8=\langle a, b \ |\ a^4=1, a^2=b^2, b\inv ab=a\inv\rangle.$ It has center $Z(Q_8)=\langle a^2\rangle$.
Then $\GG(\CC(Q_8))$ is generated by $\{a, b, x\}$, and $\GG_2(\CC(Q_8))$ is generated by $\{a, b, xax\inv, xbx\inv, x^2\}$.

\begin{lemma}\label{q8form}
Any non-trivial torsion element of $\GG(\CC(Q_8))$ can be uniquely written in the following  form:
$$\big(\prod_{r=1}^{k}x^{i_r}ax^{-i_r}\big)\big(\prod_{r=1}^{l}x^{j_r}bx^{-j_r}\big)\big(\prod_{r=1}^{n}x^{m_r}a^2x^{-m_r}\big),$$
 where $i_1<i_2<\cdots<i_k, j_1<j_2<\cdots<j_l,$ and $m_1<m_2<\cdots <m_n$. 
\end{lemma}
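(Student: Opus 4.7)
The plan is to establish existence and uniqueness separately, both using Lemma \ref{form} together with the commutation rules of Lemma \ref{cor}. The central observation is that $[Q_8,Q_8]=\langle a^2\rangle$, so every commutator $[g,h]$ with $g,h\in Q_8$ lies in $\{1,a^2\}$; consequently every ``correction term'' produced by Lemma \ref{cor} is a conjugate of $a^2$, which is central in $N$.

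For existence, begin with the Lemma \ref{form} normal form $\gamma=\prod_{r=1}^p x^{i_r}f_r x^{-i_r}$ with $i_1<\cdots<i_p$ and $f_r\in Q_8\setminus\{1\}$. Every non-identity element of $Q_8$ factors uniquely as $a^{\epsilon}b^{\beta}(a^2)^{\delta}$ with $\epsilon,\beta,\delta\in\{0,1\}$, so each conjugate splits as $(x^{i_r}ax^{-i_r})^{\epsilon_r}(x^{i_r}bx^{-i_r})^{\beta_r}(x^{i_r}a^2x^{-i_r})^{\delta_r}$. I would then apply Lemma \ref{cor} repeatedly to shuffle all $a$-conjugates to the front and all $a^2$-conjugates to the back, absorbing each swap's correction term into the trailing $a^2$-block. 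Since $[a,a^{-1}]=[b,b^{-1}]=1$ and $a^2$-conjugates are central in $N$, each block can be sorted by $x$-power without further corrections; duplicate powers cannot occur in the $a$- or $b$-block because distinct $i_r$ supply distinct powers, and duplicates in the $a^2$-block cancel via $(x^m a^2 x^{-m})^2=1$.

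For uniqueness, set $Z:=\langle x^n a^2 x^{-n} : n\in\Z\rangle$, which is central in $N$ and normal in $\GG(\CC(Q_8))$ by Lemma \ref{cor}. I would identify the quotient $\GG(\CC(Q_8))/Z$ with the lamplighter group $(\Z_2\times\Z_2)\wr\Z$: the images of $a,b$ generate $Q_8/\langle a^2\rangle\cong\Z_2\times\Z_2$, and by Lemma \ref{form} any $\gamma\in N$ whose image in the lamplighter vanishes has normal form $\prod x^{i_r}f_r x^{-i_r}$ with each $f_r\in[Q_8,Q_8]\setminus\{1\}=\{a^2\}$, so $\gamma\in Z$. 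Given two representations of the same $\gamma$ in the claimed form, projecting to the lamplighter kills both $a^2$-blocks and leaves two normal-form expressions in $(\Z_2\times\Z_2)\wr\Z$ that must coincide; this pins down $k,l,(i_r),(j_r)$. Cancelling then reduces to an equality inside $Z$, where one more application of Lemma \ref{form} yields uniqueness of $n$ and $(m_r)$.

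The main technical step is the identification of the kernel of $\GG(\CC(Q_8))\twoheadrightarrow (\Z_2\times\Z_2)\wr\Z$ as exactly $Z$, which is controlled by Lemma \ref{form}; once this is in hand, both existence and uniqueness split cleanly along the short exact sequence $1\to Z\to \GG(\CC(Q_8))\to (\Z_2\times\Z_2)\wr\Z\to 1$.
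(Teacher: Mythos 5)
Your proof is correct and fills in exactly what the paper leaves implicit: the paper's entire proof is the single sentence that the lemma ``follows from Lemma \ref{form} and the group structure of $Q_8$'', and your existence argument (writing each $f_r$ uniquely as $a^{\epsilon}b^{\beta}(a^2)^{\delta}$ and sorting the conjugates via Lemma \ref{cor}, with every correction term landing in the central $a^2$-block since $[Q_8,Q_8]=\{1,a^2\}$) is precisely the intended content. Your uniqueness step, packaged through the quotient $\GG(\CC(Q_8))/Z\cong(\Z_2\times\Z_2)\wr\Z$, is a somewhat more structured route than the direct appeal to the uniqueness statement of Lemma \ref{form} that the paper presumably has in mind, but it is sound and reduces to the same facts.
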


\begin{proof}
It follows from Lemma \ref{form} and the group structure of $Q_8$.
\end{proof}

\begin{theorem}
$\GG_2(\CC(Q_8))$ embeds into $\gl_6 (\Z_2[t^{\pm 1}])$.
\end{theorem}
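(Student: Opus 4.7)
The plan is to define an explicit homomorphism $\phi\colon\GG_2(\CC(Q_8))\to\gl_6(\Z_2[t^{\pm 1}])$ on the five generators $a$, $b$, $xax^{-1}$, $xbx^{-1}$, $x^2$ and then show it is faithful. I would send $x^2$ to a diagonal matrix $D=\mathrm{diag}(t^{c_1},\ldots,t^{c_6})$ with $c_1>\cdots>c_6$, so that conjugation by $D$ scales the $(i,j)$-entry of any matrix by $t^{c_i-c_j}$, a strictly positive power of $t$ whenever $i<j$; this realises the contracting $\Z$-action on $N$ at the matrix level. The four torsion generators I would send to specific order-four unipotent matrices in $\ut_6(\Z_2[t^{\pm 1}])$, chosen so that every pairwise commutator has zero $(i,j)$-entry for $0<j-i<5$ and is supported only at the top-right slot $(1,6)$, and so that the squares $\phi(a)^2=\phi(b)^2$ and $\phi(xax^{-1})^2=\phi(xbx^{-1})^2$ also live at that central $(1,6)$-slot. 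The image of the 2-step nilpotent subgroup $N$ thus lies in a subgroup of $\ut_6(\Z_2[t^{\pm 1}])$ whose centre is the one-parameter family $\{I+fE_{16}:f\in\Z_2[t^{\pm 1}]\}$.

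Well-definedness reduces to checking the relations of Theorem \ref{pres} restricted to $\GG_2$. The built-in $y=x^2$-equivariance of $\phi$ organises the commutator relations $[x^mgx^{-m},x^nhx^{-n}]=x^{\lceil(m+n)/2\rceil}[g,h]x^{-\lceil(m+n)/2\rceil}$ into countably many shifted identities of the shape $[\phi(g_1),D^s\phi(g_2)D^{-s}]=D^{\lfloor s/2\rfloor}\phi(a^2\text{ or }\alpha^2)D^{-\lfloor s/2\rfloor}$ for $s\geq 0$. Lemma \ref{gh1m} is tailor-made here: once one establishes by a direct block computation that all commutator entries vanish outside $(1,6)$, the surviving $(1,6)$-entry of the left-hand side equals $\sum_{k=2}^{5}\bigl(\phi(g_1)_{1k}(D^s\phi(g_2)D^{-s})_{k6}+(D^s\phi(g_2)D^{-s})_{1k}\phi(g_1)_{k6}\bigr)$, reducing the whole family of relations to a single polynomial identity in $\Z_2[t^{\pm 1}]$ whose exponents align across $s$ precisely because of the entry design in the previous paragraph. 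The recursive inverse formula stated just before Lemma \ref{gh1m} handles the $g^{-1}$ that appear when expanding each commutator.

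For injectivity I would use Lemma \ref{q8form}: any element of $\GG_2(\CC(Q_8))$ has the form $\gamma\cdot(x^2)^k$ with $\gamma\in N$ in the unique normal form $\prod_r x^{i_r}ax^{-i_r}\prod_s x^{j_s}bx^{-j_s}\prod_t x^{m_t}a^2x^{-m_t}$. Since $D^k$ is diagonal with non-constant diagonal entries when $k\neq 0$, while $\phi(\gamma)\in\ut_6$, $\phi(\gamma\cdot(x^2)^k)=I$ forces $k=0$. With $k=0$ I would analyse $\phi(\gamma)$ level by level along the bands $\{j-i=l\}$ for $l=1,\ldots,5$: at each level, every normal-form factor contributes a single $t$-monomial whose exponent is a strictly monotone linear function of its own index, and by design the $a$-family and $b$-family contributions at levels $1$ through $4$ are supported on disjoint entries, so the level-wise contributions across distinct factors of the normal form are linearly independent over $\Z_2$ and $\phi(\gamma)=I$ forces every normal-form factor to be trivial.

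The main obstacle is the first step: concretely producing four $6\times 6$ unipotent matrices over $\Z_2[t^{\pm 1}]$ that simultaneously realise order four, common square, and the correct $Q_8$-type pairwise commutators all confined to entry $(1,6)$, while being compatible with the diagonal contraction $D$ in the uniform sense required so that all shifted commutator relations reduce to a single identity. Once these matrices are pinned down, Lemma \ref{gh1m} and the preceding inverse formula reduce the remainder of the argument to bookkeeping with monomials in $t$.
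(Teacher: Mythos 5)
Your overall strategy coincides with the paper's: define the map on the five generators $\{a,b,xax^{-1},xbx^{-1},x^2\}$, send $x^2$ to a diagonal matrix and the torsion generators to unipotent matrices whose pairwise commutators are confined to the $(1,6)$ slot, verify the relations of Theorem \ref{pres} via Lemma \ref{gh1m}, and deduce injectivity from the normal form of Lemma \ref{q8form}. But there is a genuine gap, and you name it yourself: you never exhibit the four unipotent matrices, and their existence with all the required properties (order four, common central square, $Q_8$-type commutators supported only at $(1,6)$, uniform compatibility with the diagonal action) \emph{is} the content of the theorem. The paper's proof consists precisely of writing these matrices down and checking the four families of shifted commutator identities explicitly; a plan that defers ``the main obstacle'' to a later construction is not yet a proof.

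Moreover, the one concrete structural commitment you do make is incompatible with the relations you must verify. You take $\phi(x^2)=D=\mathrm{diag}(t^{c_1},\dots,t^{c_6})$ with $c_1>\dots>c_6$ strictly decreasing. Since $[x^2ax^{-2},b]=x[a,b]x^{-1}=[a,x^2bx^{-2}]$, Lemma \ref{gh1m} applied to the $(1,6)$-entries gives
\[
\sum_{k=2}^{5}\Bigl(t^{c_1-c_k}\phi(a)_{1k}\phi(b)_{k6}+t^{c_k-c_6}\phi(b)_{1k}\phi(a)_{k6}\Bigr)=\sum_{k=2}^{5}\Bigl(t^{c_k-c_6}\phi(a)_{1k}\phi(b)_{k6}+t^{c_1-c_k}\phi(b)_{1k}\phi(a)_{k6}\Bigr),
\]
and running the same comparison over the full two-parameter family $[\phi(x^{2m}ax^{-2m}),\phi(x^{2n}bx^{-2n})]$ (whose value must depend only on $m+n$) forces $c_1-c_k=c_k-c_6$, i.e.\ $c_k=(c_1+c_6)/2$, for every middle index $k$ carrying a nonzero first-row or last-column entry. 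With strictly decreasing $c_i$ at most one index $k_0$ can qualify, and a single interacting index is impossible: setting $p=\phi(a)_{1k_0}$, $q=\phi(a)_{k_06}$, $r=\phi(b)_{1k_0}$, $s=\phi(b)_{k_06}$, the $Q_8$ relations $a^2=b^2=[a,b]$ become $pq=rs$ and $pq=ps+rq$ in $\Z_2[t^{\pm 1}]$, which reduce to $q^2+qs+s^2=0$; multiplying by $q+s$ gives $q^3=s^3$, hence $q=s$ (as $\F_2(t)$ contains no primitive cube root of unity), and then $q^2+qs+s^2=q^2\neq 0$, a contradiction. This is exactly why the paper takes $\alpha(x^2)=\mathrm{diag}(1,t,t,t,t,t^2)$ with the four middle entries \emph{equal}, letting the $a$-family and $b$-family occupy disjoint pairs of middle indices. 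Your injectivity outline along the bands $j-i=1,\dots,5$ does match the paper's argument (it reads off the entries $(1,4),(1,2),(1,5),(1,3),(1,6)$ in that order), so with correct matrices supplied the remainder of your plan would go through.
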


\begin{proof}
Let $\alpha$ be the map from $\{a, b, xax\inv, xbx\inv, x^2\}$ to $\gl_6 (\Z_2[t^{\pm 1}])$ satisfying

$\alpha(a)=\left( \begin{array}{cccccc}
 1 & t & 0 & 0 & 0 & 0\\
  0 & 1 & 0 & 0 & 0 & 1\\
  0 & 0 & 1 & 0 & 0 & 0\\
  0 & 0 & 0 & 1 & 0 & 0\\
  0 & 0 & 0 & 0 & 1 & 0\\ 
  0 & 0 & 0 & 0 & 0 & 1
  \end{array}\right),\ \ 
  \alpha(b)=\left( \begin{array}{cccccc}
 1 & 0 & t & 0 & 0 & 0\\
  0 & 1 & 0 & 0 & 0 & 1\\
  0 & 0 & 1 & 0 & 0 & 1\\
  0 & 0 & 0 & 1 & 0 & 0\\
  0 & 0 & 0 & 0 & 1 & 0\\ 
  0 & 0 & 0 & 0 & 0 & 1
  \end{array}\right), \\
\alpha(xax\inv)=\left( \begin{array}{cccccc}
 1 & 1 & 0 & 1 & 0 & 0\\
  0 & 1 & 0 & 0 & 0 & t\inv\\
  0 & 0 & 1 & 0 & 0 & 0\\
  0 & 0 & 0 & 1 & 0 & t\inv+1\\
  0 & 0 & 0 & 0 & 1 & 1\\ 
  0 & 0 & 0 & 0 & 0 & 1
  \end{array}\right),\ \ 
\alpha(xbx\inv)=\left( \begin{array}{cccccc}
 1 & 0 & 1 & 0 & 1 & 0\\
  0 & 1 & 0 & 0 & 0 & t\inv\\
  0 & 0 & 1 & 0 & 0 & t\inv\\
  0 & 0 & 0 & 1 & 0 & t\inv\\
  0 & 0 & 0 & 0 & 1 & t\inv+1\\ 
  0 & 0 & 0 & 0 & 0 & 1
  \end{array}\right), \\
\alpha(x^2)=\left( \begin{array}{cccccc}
 1 & 0 & 0 & 0 & 0 & 0\\
  0 & t & 0 & 0 & 0 & 0\\
  0 & 0 & t & 0 & 0 & 0\\
  0 & 0 & 0 & t & 0 & 0\\
  0 & 0 & 0 & 0 & t & 0\\ 
  0 & 0 & 0 & 0 & 0 & t^2
  \end{array}\right).
 $

Easily $\{\alpha(a),\alpha(b)\}$ and $\{\alpha(xax\inv),\alpha(xbx\inv)\}$ generates copies of $Q_8$. Next, we show that the map $\alpha$ extends to a group homomorphism from $\GG_2(\CC(Q_8))$ to $\gl_6 (\Z_2[t^{\pm 1}])$.

Extending $\alpha$ to generators of the torsion subgroup by conjugate of $\alpha(x^2)$, we have
\\ \\$\alpha(x^{2n}ax^{-2n})=\left( \begin{array}{cccccc}
 1 & t^{-n+1} & 0 & 0 & 0 & 0\\
  0 & 1 & 0 & 0 & 0 & t^{-n}\\
  0 & 0 & 1 & 0 & 0 & 0\\
  0 & 0 & 0 & 1 & 0 & 0\\
  0 & 0 & 0 & 0 & 1 & 0\\ 
  0 & 0 & 0 & 0 & 0 & 1
  \end{array}\right),
  \alpha(x^{2n}bx^{-2n})=\left( \begin{array}{cccccc}
 1 & 0 & t^{-n+1} & 0 & 0 & 0\\
  0 & 1 & 0 & 0 & 0 & t^{-n}\\
  0 & 0 & 1 & 0 & 0 & t^{-n}\\
  0 & 0 & 0 & 1 & 0 & 0\\
  0 & 0 & 0 & 0 & 1 & 0\\ 
  0 & 0 & 0 & 0 & 0 & 1
  \end{array}\right),\\
\alpha(x^{2n+1}ax^{-2n-1})=\left( \begin{array}{cccccc}
 1 & t^{-n} & 0 & t^{-n} & 0 & 0\\
  0 & 1 & 0 & 0 & 0 & t^{-n-1}\\
  0 & 0 & 1 & 0 & 0 & 0\\
  0 & 0 & 0 & 1 & 0 & t^{-n-1}+t^{-n}\\
  0 & 0 & 0 & 0 & 1 & t^{-n}\\ 
  0 & 0 & 0 & 0 & 0 & 1
  \end{array}\right), \\
\alpha(x^{2n+1}bx^{-2n-1})=\left( \begin{array}{cccccc}
 1 & 0 & t^{-n} & 0 & t^{-n} & 0\\
  0 & 1 & 0 & 0 & 0 & t^{-n-1}\\
  0 & 0 & 1 & 0 & 0 & t^{-n-1}\\
  0 & 0 & 0 & 1 & 0 & t^{-n-1}\\
  0 & 0 & 0 & 0 & 1 & t^{-n-1}+t^{-n}\\ 
  0 & 0 & 0 & 0 & 0 & 1
  \end{array}\right),\\
   \alpha(x^{n}a^2x^{-n})=\left( \begin{array}{cccccc}
 1 & 0 & 0 & 0 & 0 & t^{-n+1}\\
  0 & 1 & 0 & 0 & 0 & 0\\
  0 & 0 & 1 & 0 & 0 & 0\\
  0 & 0 & 0 & 1 & 0 & 0\\
  0 & 0 & 0 & 0 & 1 & 0\\ 
  0 & 0 & 0 & 0 & 0 & 1
  \end{array}\right).$

To show $\alpha$ extends to a group homomorphism, it suffices to check those relations in Theorem \ref {pres}. Indeed, applying Lemma \ref{gh1m}, $\alpha(x^nax^{-n})$'s commute with each other, so do $\alpha(x^nbx^{-n})$'s. Moreover,  we calculate
\begin{align*}
& [\alpha(x^{2n}ax^{-2n}), \alpha(x^{2m}bx^{-2m})]\\ &=\left[\left( \begin{array}{cccccc}
 1 & t^{-n+1} & 0 & 0 & 0 & 0\\
  0 & 1 & 0 & 0 & 0 & t^{-n}\\
  0 & 0 & 1 & 0 & 0 & 0\\
  0 & 0 & 0 & 1 & 0 & 0\\
  0 & 0 & 0 & 0 & 1 & 0\\ 
  0 & 0 & 0 & 0 & 0 & 1
  \end{array}\right), \left( \begin{array}{cccccc}
 1 & 0 & t^{-m+1} & 0 & 0 & 0\\
  0 & 1 & 0 & 0 & 0 & t^{-m}\\
  0 & 0 & 1 & 0 & 0 & t^{-m}\\
  0 & 0 & 0 & 1 & 0 & 0\\
  0 & 0 & 0 & 0 & 1 & 0\\ 
  0 & 0 & 0 & 0 & 0 & 1
  \end{array}\right)\right]\\ &= \left(\begin{array}{cccccc}
 1 & 0 & 0 & 0 & 0 & t^{-m-n+1}\\
  0 & 1 & 0 & 0 & 0 & 0\\
  0 & 0 & 1 & 0 & 0 & 0\\
  0 & 0 & 0 & 1 & 0 & 0\\
  0 & 0 & 0 & 0 & 1 & 0\\ 
  0 & 0 & 0 & 0 & 0 & 1
  \end{array}\right) =\alpha([x^{2n}ax^{-2n},x^{2m}bx^{-2m}]),
\end{align*}

\begin{align*}
& [\alpha(x^{2n}ax^{-2n}), \alpha(x^{2m+1}bx^{-2m-1})]\\ & =\left[\left( \begin{array}{cccccc}
 1 & t^{-n+1} & 0 & 0 & 0 & 0\\
  0 & 1 & 0 & 0 & 0 & t^{-n}\\
  0 & 0 & 1 & 0 & 0 & 0\\
  0 & 0 & 0 & 1 & 0 & 0\\
  0 & 0 & 0 & 0 & 1 & 0\\ 
  0 & 0 & 0 & 0 & 0 & 1
  \end{array}\right), \left( \begin{array}{cccccc}
 1 & 0 & t^{-m} & 0 & t^{-m} & 0\\
  0 & 1 & 0 & 0 & 0 & t^{-m-1}\\
  0 & 0 & 1 & 0 & 0 & t^{-m-1}\\
  0 & 0 & 0 & 1 & 0 & t^{-m-1}\\
  0 & 0 & 0 & 0 & 1 & t^{-m-1}+t^{-m}\\ 
  0 & 0 & 0 & 0 & 0 & 1
  \end{array}\right)\right] \\ &= \left(\begin{array}{cccccc}
 1 & 0 & 0 & 0 & 0 & t^{-n-m}\\
  0 & 1 & 0 & 0 & 0 & 0\\
  0 & 0 & 1 & 0 & 0 & 0\\
  0 & 0 & 0 & 1 & 0 & 0\\
  0 & 0 & 0 & 0 & 1 & 0\\ 
  0 & 0 & 0 & 0 & 0 & 1
  \end{array}\right)=\alpha([x^{2n}ax^{-2n},x^{2m+1}bx^{-2m-1}]),
\end{align*}

\begin{align*}
& [\alpha(x^{2n+1}ax^{-2n-1}), \alpha(x^{2m}bx^{-2m})]\\ &=\left[\left( \begin{array}{cccccc}
 1 & t^{-n} & 0 & t^{-n} & 0 & 0\\
  0 & 1 & 0 & 0 & 0 & t^{-n-1}\\
  0 & 0 & 1 & 0 & 0 & 0\\
  0 & 0 & 0 & 1 & 0 & t^{-n-1}+t^{-n}\\
  0 & 0 & 0 & 0 & 1 & t^{-n}\\ 
  0 & 0 & 0 & 0 & 0 & 1
  \end{array}\right), \left( \begin{array}{cccccc}
 1 & 0 & t^{-m+1} & 0 & 0 & 0\\
  0 & 1 & 0 & 0 & 0 & t^{-m}\\
  0 & 0 & 1 & 0 & 0 & t^{-m}\\
  0 & 0 & 0 & 1 & 0 & 0\\
  0 & 0 & 0 & 0 & 1 & 0\\ 
  0 & 0 & 0 & 0 & 0 & 1
  \end{array}\right)\right] \\ &= \left(\begin{array}{cccccc}
 1 & 0 & 0 & 0 & 0 & t^{-n-m}\\
  0 & 1 & 0 & 0 & 0 & 0\\
  0 & 0 & 1 & 0 & 0 & 0\\
  0 & 0 & 0 & 1 & 0 & 0\\
  0 & 0 & 0 & 0 & 1 & 0\\ 
  0 & 0 & 0 & 0 & 0 & 1
  \end{array}\right)=\alpha([x^{2n+1}ax^{-2n-1},x^{2m}bx^{-2m}]),
\end{align*}

\begin{align*}
& [\alpha(x^{2n+1}ax^{-2n-1}), \alpha(x^{2m+1}bx^{-2m-1})]\\ &=\left[\left( \begin{array}{cccccc}
 1 & t^{-n} & 0 & t^{-n} & 0 & 0\\
  0 & 1 & 0 & 0 & 0 & t^{-n-1}\\
  0 & 0 & 1 & 0 & 0 & 0\\
  0 & 0 & 0 & 1 & 0 & t^{-n-1}+t^{-n}\\
  0 & 0 & 0 & 0 & 1 & t^{-n}\\ 
  0 & 0 & 0 & 0 & 0 & 1
  \end{array}\right), \left( \begin{array}{cccccc}
 1 & 0 & t^{-m} & 0 & t^{-m} & 0\\
  0 & 1 & 0 & 0 & 0 & t^{-m-1}\\
  0 & 0 & 1 & 0 & 0 & t^{-m-1}\\
  0 & 0 & 0 & 1 & 0 & t^{-m-1}\\
  0 & 0 & 0 & 0 & 1 & t^{-m-1}+t^{-m}\\ 
  0 & 0 & 0 & 0 & 0 & 1
  \end{array}\right)\right]\\ &= \left(\begin{array}{cccccc}
 1 & 0 & 0 & 0 & 0 & t^{-n-m}\\
  0 & 1 & 0 & 0 & 0 & 0\\
  0 & 0 & 1 & 0 & 0 & 0\\
  0 & 0 & 0 & 1 & 0 & 0\\
  0 & 0 & 0 & 0 & 1 & 0\\ 
  0 & 0 & 0 & 0 & 0 & 1
    \end{array}\right)=\alpha([x^{2n+1}ax^{-2n-1},x^{2m+1}bx^{-2m-1}]).
\end{align*}
Therefore, the compatibility of $\alpha$ with the relations in Theorem \ref{pres} follows from the commutator identities in group theory.

Finally, we show the homomorphism $\alpha$ has trivial kernel. First note that any torsion free element does not lie in the kernel. Let $\gamma$ be a torsion element in $\GG(\CC(Q_8))$. It has the normal form as in Lemma \ref{q8form}. Let $\alpha(\gamma)=\big(\alpha(\gamma)_{ij}\big)  =I,$ the identity matrix.   $\alpha(\gamma)_{14}=0$ implies that there are no odd conjugates of $a$ in the normal form of $\gamma$. Then,  $\alpha(\gamma)_{12}=0$ implies that there are no conjugates of $a$ in $\gamma$. Moreover, $\alpha(\gamma)_{15}=0$ implies that there are no odd conjugates of $b$ in $\gamma$ and hence $\alpha(\gamma)_{13}=0$ implies that there are no conjugates of $b$ in $\gamma$. $\alpha(\gamma)_{16}=0$ implies that there are no conjugates of $a^2$ in $\gamma$. So, $\alpha(\gamma)=I$ implies that $\gamma$ is trivial.
\end{proof}

\noindent{\bf Remark:}
Since virtually linear implies linear, then we have the linearity of $\GG(\CC(Q_8))$.

\subsection{Linearity of $\GG(\CC(M_{2^n}))$}

Now, we consider the Iwasawa/modular group $M_{2^n}$ of order $2^n$, $n\geq 3$, where $M_{2^n}=\langle a, b \ |\ a^{2^{n-1}}=b^2=e,  bab\inv=a^{2^{n-2}+1} \rangle.$ It has center $Z(M_{2^n})=\langle a^2\rangle$ and derived subgroup $[M_{2^n},M_{2^n}]=\langle a^{2^{n-2}}\rangle$. Actually, $G=\{1, a, a^2,...,a^{2^{n-1}-1}, b, ab, a^2b,..., a^{2^{n-1}-1}b\}$. 

$\GG(\CC(M_{2^n}))=\langle a, b, x\rangle$. $\GG_2(\CC(M_{2^n}))=\langle a, b, xax\inv, xbx\inv, x^2\rangle$ is a index two subgroup of $\GG(\CC(M_{2^n}))$. 
\begin{lemma}\label{m2nform}
Any non-trivial torsion element of $\GG(\CC(M_{2^n}))$ can be uniquely written in the following form:
$$\prod_{r=1}^{k}x^{i_r}a^{s_r}x^{-i_r} \prod_{r=1}^{l}x^{j_r}bx^{-j_r} \prod_{r=1}^{n}x^{m_r}[a,b]x^{-m_r},$$
where $i_1<i_2<\cdots<i_k, j_1<j_2<\cdots<j_l,$  $m_1<m_2<\cdots <m_n$, and $1\leq s_1, s_2,\cdots, s_k\leq 2^{n-2}-1.$
\end{lemma}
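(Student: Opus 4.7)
The plan is to combine the unique normal form from Lemma \ref{form} with a canonical coordinate system on $M_{2^n}$, and then establish uniqueness via a projection onto a classical lamplighter. The central observation is that $[a,b] = a^{2^{n-2}}$ lies in the centre and has order two, so every element of $M_{2^n}$ admits a unique expression $f = a^s b^\epsilon [a,b]^\mu$ with $s \in \{0,\ldots,2^{n-2}-1\}$ and $\epsilon,\mu \in \{0,1\}$ (a count yields $2^{n-2}\cdot 2\cdot 2 = |M_{2^n}|$).

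For the existence half, I would start with the Lemma \ref{form} expression $\gamma = \prod_p x^p f_p x^{-p}$ (strictly increasing $p$, $f_p \neq 1$), decompose each $f_p = a^{s_p}b^{\epsilon_p}[a,b]^{\mu_p}$, and split each conjugate as $(x^p a^{s_p}x^{-p})(x^p b^{\epsilon_p}x^{-p})(x^p [a,b]^{\mu_p}x^{-p})$. Lemma \ref{cor} tells us that conjugates of $[a,b]$ are central in $N$, conjugates of powers of $a$ commute among themselves (since $\langle a\rangle$ is abelian), and conjugates of $b$ commute with each other; swapping an $a$-conjugate past a $b$-conjugate introduces a central $[a,b]$-conjugate because a direct calculation using $bab^{-1}=a^{2^{n-2}+1}$ yields $[b,a^{-s}] = a^{s\cdot 2^{n-2}} = [a,b]^s$. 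Sorting all $a$-conjugates to the front, then all $b$-conjugates, then all central $[a,b]$-conjugates, and cancelling duplicate $[a,b]$-conjugates using $[a,b]^2 = 1$, produces a form of the required type.

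For uniqueness I would construct a group homomorphism $\phi : \GG(\CC(M_{2^n})) \to \bar M \wr \Z$, where $\bar M = M_{2^n}/\langle[a,b]\rangle \cong \Z_{2^{n-2}} \times \Z_2$, by sending $x$ to the $\Z$-generator $t$ and $a,b$ to the lamps $\bar a, \bar b$ at position $0$. The map is well-defined because every relation of Theorem \ref{pres} becomes trivial in $\bar M \wr \Z$: the left-hand commutator vanishes because the lamp group is abelian, and the right-hand side vanishes because $[g_j,g_i] \in [M_{2^n},M_{2^n}] = \langle[a,b]\rangle$ lies in the kernel of $M_{2^n} \to \bar M$. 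Since $\phi$ kills every $[a,b]$-conjugate, two equal Lemma \ref{m2nform} forms give $\phi(A)\phi(B) = \phi(A')\phi(B')$ in the abelian lamp group $\bigoplus_\Z \bar M$. Reading coordinates position by position, the $\bar a$-component at $p$ equals $s_r$ when $p = i_r$ (and $0$ otherwise, using $s_r \in \{1,\ldots,2^{n-2}-1\}$ to avoid degenerations), while the $\bar b$-component at $p$ equals $1$ exactly when $p = j_{r'}$. This forces $A = A'$ and $B = B'$, whence $C = C'$, and Lemma \ref{form} applied to the torsion element $C = \prod x^{m_r}[a,b]x^{-m_r}$ then pins down the indices $m_r$.

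The main obstacle is the construction of $\phi$: one must check that every Theorem \ref{pres} relation maps to a triviality in $\bar M \wr \Z$. This is a routine verification relying only on the abelianness of $\bar M$ and the identification $[M_{2^n},M_{2^n}] = \langle[a,b]\rangle$; once it is in place, the rest of the argument reduces to unique representation in a classical lamplighter, which is immediate.
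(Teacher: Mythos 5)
Your proof is correct, and it supplies all the detail that the paper compresses into the single sentence ``it follows from Lemma \ref{form} and the group structure of $M_{2^n}$.'' The existence half is exactly the intended argument: factor each $f_p$ as $a^{s}b^{\epsilon}[a,b]^{\mu}$ and sort using Lemma \ref{cor}, with the corrections $[b,a^{-s}]=[a,b]^{s}$ landing in the central, order-two subgroup generated by conjugates of $[a,b]$. Where you genuinely add something is the uniqueness half: the paper's implicit route is to convert a Lemma \ref{m2nform} expression back into the Lemma \ref{form} normal form and invoke its uniqueness, which forces one to track the $[a,b]$-corrections created at midpoints during the re-sorting; your homomorphism $\phi$ onto $\bar M\wr\Z$ with $\bar M=M_{2^n}/\langle[a,b]\rangle$ kills all of those corrections at once and lets you read off the $(i_r,s_r)$ and $j_r$ data coordinatewise, after which $C=C'$ and Lemma \ref{form} handles the $m_r$. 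The well-definedness check for $\phi$ is as routine as you say (the listed relations of Theorem \ref{pres} die because the lamp group is abelian and $[M_{2^n},M_{2^n}]=\langle[a,b]\rangle$ is the kernel of $M_{2^n}\to\bar M$, and the internal relations of the copy of $M_{2^n}$ at position $0$ pass to $\bar M$ because reduction mod $\langle[a,b]\rangle$ is a homomorphism), so the argument is complete.
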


\begin{proof}
It follows from Lemma \ref{form} and the group structure of $M_{2^n}$.
\end{proof}

\begin{theorem}
$\GG_2(\CC(M_{2^n}))$ embeds into $\gl_{2^{n-1}+2} (\Z_2[t^{\pm 1}])$.
\end{theorem}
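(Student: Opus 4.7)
The plan is to follow the same template as the $\GG_2(\CC(Q_8))$ case. First I would exhibit explicit matrices $\alpha(a), \alpha(b), \alpha(xax\inv), \alpha(xbx\inv) \in \ut_{m}(\Z_2[t^{\pm 1}])$ where $m=2^{n-1}+2$, together with a diagonal matrix $\alpha(x^2)$ of the shape $\mathrm{diag}(1,t,t,\ldots,t,t^2)$, so that conjugation by $\alpha(x^2)$ scales each unitriangular entry by a monomial in $t$. The matrix $\alpha(a)$ must have order exactly $2^{n-1}$ in $\ut_m$: in characteristic $2$, $(I+N)^{2^j}=I+N^{2^j}$, so this forces the nilpotency index of $\alpha(a)-I$ to lie in $(2^{n-2},2^{n-1}]$, which is precisely why $m-2\geq 2^{n-1}$ is needed. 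The natural construction is to let $\alpha(a)$ carry a length-$2^{n-1}$ Jordan-type chain along the middle rows $2,\ldots,2^{n-1}+1$, with a $t$-weighted entry at $(1,2)$ and a connection from the end of the chain into column $m$, while $\alpha(b)$ is the involution encoding the twist $bab\inv=a^{2^{n-2}+1}$.

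Next, extend $\alpha$ to arbitrary conjugates $x^kgx^{-k}$, $g\in\{a,b,[a,b]\}$, by $\alpha(x^{2l+i}gx^{-2l-i}) = \alpha(x^2)^l\,\alpha(x^igx^{-i})\,\alpha(x^2)^{-l}$ for $i\in\{0,1\}$; since $\alpha(x^2)$ is diagonal, this is simply a rescaling of entries by powers of $t$. Compatibility of $\alpha$ with the presentation of Theorem \ref{pres} then breaks into (i) the defining relations of $M_{2^n}$ in the two copies $\langle\alpha(a),\alpha(b)\rangle$ and $\langle\alpha(xax\inv),\alpha(xbx\inv)\rangle$, which hold by construction, and (ii) the commutator relations $[x^rg_jx^{-r},x^sg_ix^{-s}] = x^{[(r+s)/2]}[g_j,g_i]x^{-[(r+s)/2]}$. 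For the latter, each generator image has its nonzero off-diagonal entries concentrated in row $1$, column $m$, and in a narrow strip of middle rows, so every inner entry $[\alpha(\cdot),\alpha(\cdot)]_{ij}$ with $0<j-i<m-1$ vanishes. Lemma \ref{gh1m} then collapses the entire verification to a single $(1,m)$-entry arithmetic in $t$-monomials that, by design, reproduces the exponent $[(r+s)/2]$.

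Finally, for injectivity I would note that any element projecting nontrivially to $\langle x^2\rangle$ has $\alpha(\gamma)$ with nontrivial powers of $t$ on the diagonal and hence is not in the kernel, so it suffices to consider torsion $\gamma$. Using the normal form of Lemma \ref{m2nform}, I would inspect entries of $\alpha(\gamma)$ associated to the ``$b$-columns'', the ``$a$-columns'', and column $m$ in turn, arguing successively that the $x^{j_r}bx^{-j_r}$ factors, then the $x^{i_r}a^{s_r}x^{-i_r}$ factors, and finally the central commutator factors must all be trivial. The chief obstacle is the first step: explicitly designing $\alpha(a)$ and $\alpha(b)$ so that simultaneously $\alpha(a)$ has order $2^{n-1}$, $\alpha(b)^2=I$, $\alpha(b)\alpha(a)\alpha(b)\inv=\alpha(a)^{2^{n-2}+1}$, and their $\alpha(x^2)$-conjugates interact through Lemma \ref{gh1m} to produce the correct central commutator in column $m$. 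In the $Q_8$ case there were only two ``data columns'' $(1,2)$ and $(1,3)$ per generator, whereas for $M_{2^n}$ these are replaced by an entire Jordan chain whose $b$-twist must encode the Iwasawa relation, which is where the bookkeeping is most delicate and where the size bound $2^{n-1}+2$ is tight.
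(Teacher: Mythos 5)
Your outline reproduces the paper's strategy exactly---explicit matrices for $a$, $b$, $xax\inv$, $xbx\inv$, a diagonal $\alpha(x^2)$ whose conjugation action rescales entries by powers of $t$, verification of the relations of Theorem \ref{pres} via Lemma \ref{gh1m}, and injectivity via the normal form of Lemma \ref{m2nform}---but as written it is a plan, not a proof. The entire mathematical content of this theorem is the explicit construction that you defer, and you say so yourself (``the chief obstacle is the first step''). Nothing in the proposal actually produces matrices of size $2^{n-1}+2$ satisfying simultaneously $\alpha(a)^{2^{n-1}}=I$, $\alpha(a)^{2^{n-2}}\neq I$, $\alpha(b)^2=I$, the Iwasawa relation $\alpha(b)\alpha(a)\alpha(b)\inv=\alpha(a)^{2^{n-2}+1}$, and the correct interaction of the $\alpha(x^2)$-conjugates, nor verifies that such matrices exist. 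The paper does this by taking $\alpha(a)=I+E_{1,2}+\sum_{m=1}^{2^{n-2}-2}E_{2m,2m+2}+E_{2^{n-1}-2,2^{n-1}+2}+E_{1,2^{n-1}}$, a chain supported on even-indexed rows, with $\alpha(xax\inv)$ supported on odd-indexed rows (so that the hypothesis of Lemma \ref{gh1m} holds for all the relevant pairs), and $\alpha(b)$, $\alpha(xbx\inv)$ supported only in rows $2^{n-1}$ and $2^{n-1}+1$; until something of this kind is written down and the identities checked, the argument has a genuine gap.

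One concrete point where your guess would fail if carried out literally: you propose $\alpha(x^2)=\mathrm{diag}(1,t,\dots,t,t^2)$ as in the $Q_8$ case. With that choice conjugation acts trivially on every entry $E_{ij}$ with $2\le i,j\le 2^{n-1}+1$, so the Jordan-chain parts of all the conjugates $\alpha(x^{2k}ax^{-2k})$ would coincide; the images of distinct conjugates of $a$ could then not be separated in the kernel computation, and the $(1,2^{n-1}+2)$-entries of the commutators would not produce the exponents needed to match $\alpha(x^{[(r+s)/2]}[a,b]x^{-[(r+s)/2]})$. The paper instead uses $\alpha(x^2)=E_1+\sum_{m=1}^{2^{n-2}-1}(t^mE_{2m}+t^mE_{2m+1})+t^{2^{n-3}}E_{2^{n-1}}+t^{2^{n-3}}E_{2^{n-1}+1}+t^{2^{n-2}}E_{2^{n-1}+2}$, with strictly increasing paired weights along the chain (so each chain entry of $\alpha(a)$ scales by exactly $t^{-1}$) and the special weight $t^{2^{n-3}}$ on the $b$-block (so every commutator lands on $t^{-2^{n-3}(k+l)}E_{1,2^{n-1}+2}=\alpha(x^{k+l}[a,b]x^{-k-l})-I$ or its shift by one). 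The diagonal matrix must be engineered together with the unipotent generators; it is not a routine copy of the $Q_8$ template.
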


\begin{proof}
Let $E_{i,j}$, $1\leq i,j\leq 2^{n-1}+2$, denote the $(2^{n-1}+2)\times(2^{n-1}+2)$ matrix with 1 in the $i$-th row $j$-th column and 0 everywhere else.  $E_{i,i}$ is simply denoted by $E_{i}$.

Let $\alpha$ be the map from the generating set $\{a, b, xax\inv, xbx\inv, x^2\}$ to $\gl_{2^{n-1}+2} (\Z_2[t^{\pm 1}])$ satisfying:

$$\alpha(a)=I+E_{1,2}+\sum\limits_{m=1}^{2^{n-2}-2}E_{2m,2m+2}+E_{2^{n-1}-2,2^{n-1}+2}+E_{1,2^{n-1}},$$
$$\alpha(b)=I+E_{2^{n-1},2^{n-1}+2}+E_{2^{n-1}+1,2^{n-1}+2},$$
$$\alpha(xax\inv)=I+t^{-1}\sum\limits_{m=1}^{2^{n-3}}E_{2m-1,2m+1}+\sum\limits_{m=2^{n-3}+1}^{2^{n-2}-1}E_{2m-1,2m+1}+E_{2^{n-1}-1,2^{n-1}+2}+t^{-2^{n-3}}E_{1,2^{n-1}+1},$$
$$\alpha(xbx\inv)=I+t^{-2^{n-3}}E_{2^{n-1},2^{n-1}+2}+E_{2^{n-1}+1,2^{n-1}+2},$$
$$\alpha(x^2)=E_{1}+\sum\limits_{m=1}^{2^{n-2}-1}(t^{m}E_{2m}+t^{m}E_{2m+1})+t^{2^{n-3}}E_{2^{n-1}}+t^{2^{n-3}}E_{2^{n-1}+1}+t^{2^{n-2}}E_{2^{n-1}+2}.$$

It is easy to verify that $\alpha(a)$ has order $2^{n-1}$ and $\alpha(a)^{2^{n-2}}=I+E_{1,2^{n-1}+2}$. $\alpha(a)$ has $2^{n-2}+1$ non-zero entries above the diagonal, and $\alpha(a)^m$, $2\leq m\leq 2^{n-1}$, has at least $2^{n-1}-m$ non-zero entries above the diagonal. Indeed, 
$$\alpha(a)^2=I+E_{1,4}+\sum\limits_{m=1}^{2^{n-2}-3}E_{2m,2m+4}+E_{2^{n-1}-4,2^{n-1}+2},$$
$$\alpha(a)^4=I+E_{1,8}+\sum\limits_{m=1}^{2^{n-2}-5}E_{2m,2m+8}+E_{2^{n-1}-8,2^{n-1}+2},$$
$$\cdots\cdots$$
$$\alpha(a)^{2^j}=I+E_{1,2^{j+1}}+\sum\limits_{m=1}^{2^{n-2}-2^j-1}E_{2m,2m+2^{j+1}}+E_{2^{n-1}-2^{j+1},2^{n-1}+2},$$
$$\cdots\cdots$$
$$\alpha(a)^{2^{n-3}}=I+E_{1,2^{n-2}}+\sum\limits_{m=1}^{2^{n-2}-2^{n-3}-1}E_{2m,2m+2^{n-2}}+E_{2^{n-1}-2^{n-2},2^{n-1}+2},$$
$$\alpha(a)^{2^{n-2}}=I+E_{1,2^{n-1}+2}, $$
$$ \alpha(a)^{2^{n-1}}=I.$$
Moreover, we have similar properties with $\alpha(xax^{-1})$ because of similar construction:
$$\alpha(xax\inv)^{2^{n-2}}=I+t^{-2^{n-3}}E_{1,2^{n-1}+2}, $$
$$ \alpha(xax\inv)^{2^{n-1}}=I.$$

Applying Lemma \ref{gh1m} where the assumptions are easily verified, we see that 
$$[\alpha(a),\alpha(b)]=I+E_{1,2^{n-1}+2}=\alpha(a)^{2^{n-2}}, $$
$$[\alpha(xax^{-1}),\alpha(xbx^{-1})]=I+t^{-2^{n-3}}E_{1,2^{n-1}+2}=\alpha(xax^{-1})^{2^{n-2}}.$$
Since $\alpha(b)^2=\alpha(xbx\inv)^2=I$, then $\{\alpha(a), \alpha(b)\}$ and $\{\alpha(xax\inv), \alpha(xbx\inv)\}$ generates two copies of $M_{2^n}$.

Next, we show that $\alpha$ extends to a group homomorphism. Extending $\alpha$, we have 
$$\alpha(x^{2k}ax^{-2k})=I+t^{-k}E_{1,2}+t^{-k}\sum\limits_{m=1}^{2^{n-2}-2}E_{2m,2m+2}+t^{-k}E_{2^{n-1}-2,2^{n-1}+2}+t^{-2^{n-3}k}E_{1,2^{n-1}},$$
$$\alpha(x^{2k}bx^{-2k})=I+t^{-2^{n-3}k}E_{2^{n-1},2^{n-1}+2}+t^{-2^{n-3}k}E_{2^{n-1}+1,2^{n-1}+2},$$
$$\alpha(x^{2k+1}ax^{-2k-1})=I+t^{-1-k}\sum\limits_{m=1}^{2^{n-3}}E_{2m-1,2m+1}+t^{-k}\sum\limits_{m=2^{n-3}+1}^{2^{n-2}-1}E_{2m-1,2m+1}$$ $$+t^{-k}E_{2^{n-1}-1,2^{n-1}+2}+t^{-2^{n-3}(k+1)}E_{1,2^{n-1}+1},$$
$$\alpha(x^{2k+1}bx^{-2k-1})=I+t^{-2^{n-3}(k+1)}E_{2^{n-1},2^{n-1}+2}+t^{-2^{n-3}k}E_{2^{n-1}+1,2^{n-1}+2},$$
$$\alpha(x^{k}[a,b]x^{-k})=I+t^{-2^{n-3}k}E_{1,2^{n-1}+2}.$$
Applying Lemma \ref{gh1m} where the assumptions are easily verified because of the alternating construction of $\alpha(a)$ and $\alpha(xax\inv)$ and symmetricity of formula in the lemma, we see that $\alpha(x^{k}ax^{-k})$'s commute with each other, so do $\alpha(x^{k}bx^{-k})$'s.  Moreover, we see that $\alpha(x^{k}a^2x^{-k})$'s commute with $\alpha(x^{k}bx^{-k})$'s and 
$$[\alpha(x^{2k}ax^{-2k}),\alpha(x^{2l}bx^{-2l})]=I+t^{-2^{n-3}(k+l)}E_{1,2^{n-1}+2}=\alpha(x^{k+l}[a,b]x^{-k-l}),$$
$$[\alpha(x^{2k+1}ax^{-2k-1}),\alpha(x^{2l}bx^{-2l})]=I+t^{-2^{n-3}(k+l+1)}E_{1,2^{n-1}+2}=\alpha(x^{k+l+1}[a,b]x^{-k-l-1}),$$
$$[\alpha(x^{2k}ax^{-2k}),\alpha(x^{2l+1}bx^{-2l-1})]=I+t^{-2^{n-3}(k+l+1)}E_{1,2^{n-1}+2}=\alpha(x^{k+l+1}[a,b]x^{-k-l-1}),$$
$$[\alpha(x^{2k+1}ax^{-2k-1}),\alpha(x^{2l+1}bx^{-2l-1})]=I+t^{-2^{n-3}(k+l+1)}E_{1,2^{n-1}+2}=\alpha(x^{k+l+1}[a,b]x^{-k-l-1}).$$
Therefore, the compatibility of $\alpha$ with the relations in Theorem \ref{pres} follows from above calculations and the commutator identities.

Finally, we show the homomorphism $\alpha$ has trivial kernel. Note that any torsion free element does not lie in the kernel. Let $\gamma$ be a torsion element in $\GG_2(\CC(M_{2^n}))$. It has the normal form as in Lemma \ref{m2nform}. Suppose $\alpha(\gamma)=I$. 
$\alpha(\ga)_{1,2}=0$ implies that there are no even conjugates of $a^{2k-1}, 1\leq k\leq 2^{n-3},$ in the normal form of $\gamma$.  $\alpha(\ga)_{1,3}=0$ implies that there are no odd conjugates of $a^{2k-1}, 1\leq k\leq 2^{n-3},$ in $\gamma$. Moreover, $\alpha(\ga)_{1,2^m}=0, 2\leq m\leq n-2,$ imply that there are no even conjugates of $a^{2k-2}, 2\leq k\leq 2^{n-3},$ in $\gamma$ and $\alpha(\ga)_{1,2^m+1}=0, 2\leq m\leq n-2,$ implies that there are no odd conjugates of $a^{2k-2}, 2\leq k\leq 2^{n-3},$ in $\gamma$.  $\alpha(\ga)_{2^{n-1},2^{n-1}+2}=\alpha(\ga)_{2^{n-1}+1,2^{n-1}+2}=0,$ implies that there are no conjugates of $b$ in $\gamma$ and  $\alpha(\ga)_{1,2^{n-1}+2}=0$ implies that there are no conjugates of $[a,b]$ in $\gamma$. Thus, $\alpha(\gamma)=I$ implies that $\gamma$ is trivial.

\end{proof}

\noindent{\bf Remark:} M. Larsen asked whether or not $F$ being a finite $p$-group is a necessary and sufficient condition for $\GG(\CC(F))$ to be linear over one field. We only know it is necessary. Ju. E. Vapn\`{e} \cite{V1,V2} and, independently, B. A. F. Wehrfritz \cite{W0, Wp} give necessary and sufficient conditions for a lamplighter group to be linear over some field. In particular, $\Z_p\wr\Z$ is only linear over characteristic $p$. Since $\GG(\CC(F))$ contains copies of $\Z_p\wr\Z$, for different $p$'s, then $F$ being a finite $p$-group is a necessary condition for $\GG(\CC(F))$ to be linear over some field. But this of course does not rule out the cases of embedding into a product of general linear groups over different fields.

\medskip

\medskip
\noindent
Ning Yang\\
Department of Mathematics\\
Indiana University\\
Bloomington, IN 47405, USA\\
E-mail: ningyang@indiana.edu

\end{document}